\title{Extensions of Continuous Function in $LG$-Topology}
\keywords{Frame, $LGT$-space, $ LG $ map, $ OLG $ map, $ CLG $ map, Quotient $ LGT $-space, $ LG $ isomorphism.}
\subjclass[2010]{Primary: 06D22; Secondary: 54Bxx.}
\author[M. Badie]{Mehdi Badie}
\address{Department of Basic Sciences, Jundi-Shapur University of Technology, Dezful, Iran}
\email{badie@jsu.ac.ir}
\author[H. Kasiri]{Hossein Kasiri}
\address{Department of Basic Sciences, Jundi-Shapur University of Technology, Dezful, Iran}
\email{hossein\_kasiry@jsu.ac.ir}
\theoremstyle{plain}
\newtheorem{Thm}{Theorem}[section]
\newtheorem{Lem}[Thm]{Lemma}
\newtheorem{Pro}[Thm]{Proposition}
\newtheorem{Cor}[Thm]{Corollary}
\theoremstyle{definition}
\newtheorem{Exa}[Thm]{Example}
\newtheorem{Def}[Thm]{Definition}
\newcommand{\ff}{if and only if }
\begin{document}
	
	\begin{abstract}
		In this article we introduce three maps, $ OLG $, $ CLG $ and $ LG $ in $ LGT $-space literature and show that these maps are extension of the continuous function in $ LGT $-spaces and have the almost properties of the continuous functions. Also, it has been introduced and studied the natural notions, quotient, decomposition, weak $LG$-topology and isomorphism, related to the continuous function.
	\end{abstract}
	
	\maketitle
	
	\section{Introduction}
	
	Two kind of pointfree version of topological spaces have been introduced. In the first model, since the set of open sets of a topological space is a frame, the researchers focus on a topology as a frame. Introducing of the first version has been started in \cite{wallman1938lattices,mckinsey1944algebra,nobeling1954,ehresmann1957gattungen,papert1964abstract,dowker1966quotient,isbell1973atomless,simmons1978framework} and then studied in many articles. In the new model, researchers pursue this viewpoint and introduce the new structure. The second structure, introduced and studied just in \cite{aliabad2015lg} and called $ LG $-topology. In this article, we continue this studying. We introduce and study, three extensions of continuous maps in $ LG $-topology and some notions related to the continuity. Also, we show that they have the almost properties of the well-known notions in the topology literature.
	
	In the rest of this section we recall some pertinent definitions and give some elementary properties of the right adjoint of a map which we need them in the main parts of this research. In Section 2, we introduce three kind maps $ OLG $, $ CLG $ and $ LG $ maps. We show that these maps have some properties similar to the continuous function and also we prove that they are extensions of the continuous function in the lattice generalization topology literature and finally, we introduce a functor form topological spaces and continuous functions to $ LGT $-space and $ LG $-maps. Section 3 is devoted to properties of $ OLG $, $ CLG $ and $ LG $ maps on subspaces of an $ LG $-spaces. In Section 4, the relation of the $ OLG $ and product $ LGT $-space studied and then the new notions quotient $ LG $-space and decomposition $ LG $-space are introduced, by the used of $ OLG $, $ CLG $ and $ LG $ maps. Finally, in Section 5, the concepts open map, $ LG $ isomorphism and related topics introduced and studied.
	
	A lattice is said to be \emph{complete} if every subset of $ L $ has the supremum, so a complete lattice has the greatest element $ 1 $  and the smallest element $ 0 $. A \emph{frame} $ F $ is a complete lattice such that for each $ a \in F $ and $ \{ b_i \}_{i \in I} \subseteq F  $,  $ a \wedge \big(  \bigvee_{ i \in I } b_i \big) = \bigvee_{i \in I} (‌a \wedge b_i) $, if $ a \vee \big(  \bigwedge_{ i \in I } b_i \big) = \bigwedge_{i \in I} (‌a \vee b_i) $, also holds, then $ F $ is called \emph{symmetric frame}. For each element $ a \in F $,  $ a^* $ is defined $ \bigvee a^\perp $, where $ a^\perp = \{ x \in F : x \wedge a = 0 \} $. An element $ b $ of a frame $ F $ is called the \emph{complement} of an element $ a  $ of $ F $, if $ a \wedge b = 0 $ and $ a \vee b = 1 $. Clearly, the complement of an element $ a $, if exists, is unique. We denote the complement of $ a $ by $ a^c $. It is easy to see that if the complement of an element $ a $ exists, then $ a^* = a^c $. If each element of a frame $ F $ has a complement, we say that $ F $ is a \emph{complemented} frame. A subset $ G $ of a frame $ F $ is said to be a \emph{subframe} of $ F $, if $ G $ is closed under finite meets and arbitrary joins. 
	
	Suppose that $ F $ is a frame, we say $ \tau \subseteq F $ is a \emph{lattice generalized topology} (briefly \emph{$LG$-topology}) on $ F $, if $ \tau $ is a subframe of $ F $, then $ (F,\tau) $ (briefly $ F $) is said to be an \emph{$ LGT $-space}, every element of $ \tau $ is called \emph{open} element and every element of $ \tau^* = \{ t^* : t \in \tau \} $ is called \emph{closed} element. Clearly, for each family $ \mathcal{F} \subseteq \tau^*  $ we have $ \bigwedge \mathcal{F} \in \tau^* $. Furthermore, if $ \tau^* $ is sublattice of $ F $, then we call $ \tau $ is an \emph{$ LT $-space}. An $LG$-topology $ \tau $ on a frame $ F $ is said to be \emph{discrete} (\emph{trivial}), if $ \tau = F $ ($ \tau = \{ 0 , 1 \} $). If $ (F,\tau) $ is a $ LGT $-space, then for each element $ a \in F $, the \emph{interior} and \emph{closure} of $ a $, denoted by $ a^\circ $ and $ \bar{a} $, are defined by $ \bigvee \{ t \in \tau :  t \leqslant a \} $ and $ \bigwedge \{ f \in \tau^* :‌ a \leqslant f \} $, respectively. It is clear to see that, $ \overline{a} \in \tau^* $, for each $ a \in F $ and $ f^* = f $, for each $ f \in \tau^* $. If for some $ a \in F $, $ \overline{a} = 1 $, then we say $ a $ is \emph{dense} in $ F $. Clearly, if $(F, \tau)$ is an $ LGT $-space and $a \in F$, then $(F_a, \tau_a) $ is also an $ LGT $-space, in which $ F_a = \downarrow a = \{ x \in F : x \leqslant a \} $ and $ \tau_a = \{ s \wedge a: s \in \tau \}$. We call $(F_a, \tau_a) $ a \emph{subspace} of $(F, \tau)$. A $ LGT $-space $ (F,\tau) $ is called \emph{compact} if for each family of open elements $ \{‌ t_\alpha \}_{\alpha \in A} $ that $ 1 = \bigvee_{ \alpha \in A } a_\alpha $, there is a subfamily $ \{ t_{\alpha_i} \}_{i=1}^n $ of $ \{ t_\alpha \}_{\alpha \in A} $ such that $ 1 = \bigvee_{i=1}^n a_{\alpha_i} $. Similarly, \emph{countably compact} and \emph{Lindel\"of} spaces are defined. Suppose that $ \{ (‌F_\alpha,\tau_\alpha) \}_{\alpha \in A} $ is a family of $ LGT $-spaces,  then $ \tau_{_P} = \{ t \in \prod_{\alpha \in A} F_\alpha :‌  \forall \alpha \in A, \quad t_\alpha \in \tau_\alpha  \text{‌ and just for finitely many }  \alpha \in A, \; t_\alpha \neq 1  \} $  is an $LG$-topology on $ \prod_{\alpha \in A} F_\alpha $. $ (‌ \prod_{\alpha \in A} F_\alpha, \tau_{_P} ) $ is called \emph{product $ LGT $-space}. Suppose that $ (F,\tau) $ is an $ LGT $-space and $ B \subseteq \tau $, we say $ B $ is a \emph{base} for $ \tau $, if for each $ t \in \tau $, there is some $ B' \subseteq B $ such that $ t = \bigvee B' $.
	
	Suppose that $ F_1 $ and $ F_2 $ are two frames, we say a map $ \phi : F_1 \rightarrow F_2 $ is an \emph{arbitrary join preserve} map if for each $ E \subseteq F_1 $, we have $ \phi(\bigvee E) = \bigvee \phi(E) $, also we say a map $ \psi :‌F_2 \rightarrow F_1 $ is a \emph{right adjoint} of $ \phi $, if for each $ a \in  F_1 $  and $ b \in F_2  $
	\[  \phi(a) \leqslant b \qquad \Leftrightarrow \qquad  a \leqslant \psi(b)  \]
	Similarly \emph{arbitrary meet preserve} and \emph{left adjoint} are defined. It is easy to show that, if $ \phi $ is an arbitrary join (meet) preserve map, then $ \phi $ has a unique right (left) adjoint map and this right adjoint map, denoted by $ \phi_* $($ \phi^* $), is an arbitrary meet (join) preserve.
	
	The reader is referred to \cite{stephen1970general}, \cite{picado2011frames}, for undefined terms and notations.
	
	\begin{Pro}
		Suppose that $ F_1 $, $ F_2 $ and $ F_3 $ are frames, $ \phi :‌ F_1 \rightarrow F_2 $ and $ \psi :‌F_2 \rightarrow F_3 $ are two arbitrary join preserve maps. Then 
		\begin{itemize}
			\item[(a)] $ \phi_*(b) = \bigvee_{\phi(x) \leqslant b} x $, for every $ b \in F_2 $.
			\item[(b)] $ \phi_*(b) = \max\{ x \in F_1 : \phi(x) \leqslant b \}  $, for each $ b \in F_2 $. 
			\item[(c)] $ \phi (\phi_*(b))‌ = b $, for every $ b \in F_2 $.
			\item[(d)] $ \phi \circ \psi  $ is an arbitrary join preserve map and $ (‌ \phi \circ \psi)_* = \psi_* \circ \phi_* $.
		\end{itemize}
		\label{right adjoint}
	\end{Pro}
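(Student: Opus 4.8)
The plan is to derive all four items from the single defining biconditional of the adjunction, $\phi(x)\leqslant b\Leftrightarrow x\leqslant\phi_*(b)$, together with the hypothesis that $\phi$ preserves arbitrary joins; nothing deeper than Galois-connection bookkeeping is needed.

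For (a) I would set $S_b=\{x\in F_1:\phi(x)\leqslant b\}$ and note that the biconditional says exactly that $S_b$ equals the principal down-set $\{x\in F_1:x\leqslant\phi_*(b)\}$. The supremum of a principal down-set is its top element, so $\bigvee S_b=\phi_*(b)$, which is (a).

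For (b) it then remains to see that this supremum is attained inside $S_b$. Since $\phi$ preserves arbitrary joins, $\phi\big(\bigvee S_b\big)=\bigvee_{x\in S_b}\phi(x)\leqslant b$ because each such $\phi(x)\leqslant b$; hence $\bigvee S_b\in S_b$, and by (a) it is the largest element of $S_b$, i.e. $\phi_*(b)=\max S_b$. (Equivalently: put $x=\phi_*(b)$ into the biconditional, using reflexivity $\phi_*(b)\leqslant\phi_*(b)$.) Part (c) is then immediate, being precisely the statement $\phi_*(b)\in S_b$ just proved, i.e. $\phi(\phi_*(b))\leqslant b$ — the counit of the adjunction. (If the displayed equality $\phi(\phi_*(b))=b$ is the intended reading, one additionally needs $\phi$ to be onto: for $b=\phi(a)$, monotonicity together with $a\leqslant\phi_*(b)$ gives $b\leqslant\phi(\phi_*(b))$; I would either add that hypothesis or weaken the conclusion to $\leqslant$.)

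For (d), first note that a composite of arbitrary-join-preserving maps is again such: $\psi\big(\phi(\bigvee E)\big)=\psi\big(\bigvee\phi(E)\big)=\bigvee\psi(\phi(E))$ for every $E\subseteq F_1$. Hence the composite $x\mapsto\psi(\phi(x))$ has a unique right adjoint, and to identify it I would chain the two adjunctions: for $x\in F_1$ and $c\in F_3$,
\[ \psi(\phi(x))\leqslant c \iff \phi(x)\leqslant\psi_*(c) \iff x\leqslant\phi_*(\psi_*(c)), \]
so the map $c\mapsto\phi_*(\psi_*(c))$ satisfies the defining property of the right adjoint of the composite and therefore equals it, which is the asserted formula $(\phi\circ\psi)_*=\psi_*\circ\phi_*$ in the paper's composition convention. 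I do not expect a genuine obstacle here; the only points requiring care are the use of preservation of \emph{arbitrary} (not merely finite) joins in the ``attained maximum'' step of (b), and the precise form of (c) noted above.
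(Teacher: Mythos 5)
Your proposal is correct and is essentially the paper's own route: pure Galois-connection bookkeeping. The only structural difference is in (a), where the paper constructs the candidate $\upsilon(b)=\bigvee_{\phi(x)\leqslant b}x$ and verifies the adjunction biconditional for it directly (thereby also re-establishing existence of the right adjoint), whereas you take existence as given from the introduction and read the formula off the biconditional; the two are interchangeable, though the paper's version is self-contained on the existence point. Your treatment of (b) and (d) supplies exactly the details the paper waves away as ``evident,'' and your use of preservation of \emph{arbitrary} joins to get the supremum attained in (b) is the right (and necessary) step. Your caveat on (c) is a genuine catch rather than pedantry: as literally stated, $\phi(\phi_*(b))=b$ for every $b\in F_2$ is false without surjectivity of $\phi$ --- e.g.\ the join-preserving inclusion of the two-element frame $\{0,1\}$ into the three-element chain $\{0,a,1\}$ gives $\phi(\phi_*(a))=\phi(0)=0\neq a$ --- and only the counit inequality $\phi(\phi_*(b))\leqslant b$ holds in general. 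The paper declares (c) evident and later invokes the equality form (e.g.\ in Lemma \ref{preserving and right adjoin}) without an onto hypothesis, so your proposed fix (add surjectivity or weaken to $\leqslant$) is the correct repair.
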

	\begin{proof}
		(a). Since $ \phi $ is an  arbitrary join preserve map, $ \phi $ has a unique right adjoint map, so it is sufficient to show that $ \upsilon :‌F_2 \rightarrow F_1 $, defined by $ \upsilon(b) = \bigvee_{\phi(a) \leqslant b} a $, is a right adjoint of $ \phi $. Now, suppose that $ \phi(a) \leqslant b $, then $ a \leqslant \bigvee_{\phi(x) \leqslant b} x = \upsilon(b) $. Conversely, If $ a \leqslant \upsilon(b) = \bigvee_{\phi(x) \leqslant b} x  $, then 
		\[  \phi(a) = \phi\big( \bigvee_{\phi(x) \leqslant b} x \big) = \bigvee_{\phi(x) \leqslant b} \phi(x) \leqslant b. \]
		Hence $ \upsilon  $ is a right adjoint of $ \phi $.
		
		(b), (c) and (d). They are evident.
	\end{proof}
	
	We can give and prove a proposition similar to the above proposition for the arbitrary meet preserve and left adjoint maps
	
	\begin{Lem}
		Suppose that $ F_1 $ and $ F_2 $ are two frames and $ \phi : F_1 \rightarrow F_2 $. If $ \phi $ and $ \phi_* $ are arbitrary join preserve maps, then $ \phi_{**} = \phi $. 
		\label{preserving and right adjoin}
	\end{Lem}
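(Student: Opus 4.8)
The plan is to obtain $\phi_{**}=\phi$ by composing the two ``right--inverse'' identities furnished by Proposition \ref{right adjoint}(c): one applied to $\phi$ itself and one applied to its right adjoint $\phi_*$.

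First I would check that $\phi_{**}$ is legitimately defined under the stated hypotheses. Since $\phi$ is arbitrary join preserving it has a unique right adjoint $\phi_*:F_2\to F_1$; since, by assumption, $\phi_*$ is again arbitrary join preserving, it in turn has a unique right adjoint, and this is precisely $\phi_{**}:=(\phi_*)_*:F_1\to F_2$. Thus the statement makes sense, and Proposition \ref{right adjoint} is applicable both to $\phi$ and to $\phi_*$.

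Next I would invoke Proposition \ref{right adjoint}(c) twice. Applied to the arbitrary join preserving map $\phi$ it gives $\phi(\phi_*(b))=b$ for every $b\in F_2$, i.e. $\phi\circ\phi_*=\mathrm{id}_{F_2}$. Applied to the arbitrary join preserving map $\phi_*$, whose right adjoint is $\phi_{**}$, it gives $\phi_*(\phi_{**}(a))=a$ for every $a\in F_1$, i.e. $\phi_*\circ\phi_{**}=\mathrm{id}_{F_1}$.

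Finally I would combine the two identities: for every $a\in F_1$,
\[ \phi_{**}(a)=\phi\big(\phi_*(\phi_{**}(a))\big)=\phi(a), \]
where the first equality is $\phi\circ\phi_*=\mathrm{id}_{F_2}$ evaluated at the element $\phi_{**}(a)\in F_2$, and the second uses $\phi_*\circ\phi_{**}=\mathrm{id}_{F_1}$. Since $a$ is arbitrary, $\phi_{**}=\phi$. The only step that genuinely consumes the extra hypothesis — and hence the only place I would slow down — is the second application of Proposition \ref{right adjoint}(c): it is exactly the assumption that $\phi_*$ preserves arbitrary joins that both makes $\phi_{**}$ exist and yields $\phi_*\circ\phi_{**}=\mathrm{id}_{F_1}$. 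The remainder is a two-step chase of identities with no combinatorial content, so I do not anticipate any real obstacle beyond keeping the domains and codomains of the three maps straight.
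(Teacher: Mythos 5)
Your argument is correct and coincides with the paper's own proof: both apply Proposition \ref{right adjoint}(c) once to $\phi$ and once to $\phi_*$, then chain the resulting identities $\phi\circ\phi_*=\mathrm{id}_{F_2}$ and $\phi_*\circ\phi_{**}=\mathrm{id}_{F_1}$ to get $\phi_{**}(a)=\phi(\phi_*(\phi_{**}(a)))=\phi(a)$. Your added remark that the join-preservation of $\phi_*$ is what makes $\phi_{**}$ exist is a welcome clarification the paper leaves implicit.
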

	\begin{proof}
		By Proposition \ref{right adjoint}, $ \phi_*(\phi_{**}(x)) = x $, so $ \phi(\phi_*(\phi_{**}(x))) = \phi(x) $. In the other hand, again, by Proposition \ref{right adjoint}, $ \phi(\phi_*(\phi_{**}(x))) = \phi_{**}(x) $, hence $ \phi(x) = \phi_{**}(x) $.
	\end{proof}
	
	\begin{Lem}
		Suppose that $ F_1 $ and $ F_2 $ are two frames. If $ \phi $ is a one-to-one map from $ F_1 $ onto $ F_2 $, then the following statements are equivalent
		\begin{itemize}
			\item[(a)] $ \phi $ is arbitrary join preserve.
			\item[(b)] $ \phi $ is arbitrary meet preserve.
			\item[(c)] $ \phi_* $ is arbitrary join preserve.
			\item[(d)] $ \phi_* $ is arbitrary meet preserve.
		\end{itemize}
		\label{one-to-one and preserving}
	\end{Lem}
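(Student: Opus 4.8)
The plan is to show that, because $\phi$ is a bijection, each of the four conditions is equivalent to the single statement that $\phi$ is an \emph{order isomorphism}, i.e.\ both $\phi$ and $\phi^{-1}$ are order-preserving; once this is established the right adjoint is forced to be $\phi^{-1}$ and the rest follows by symmetry. So the first step is to prove the equivalence $(a)\Leftrightarrow(b)$ together with the intermediate fact that either of them is the same as ``$\phi$ is an order isomorphism''.

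For $(a)\Rightarrow$ order isomorphism: an arbitrary-join preserve map is order-preserving, and if $\phi(a)\leqslant\phi(b)$ then $\phi(a\vee b)=\phi(a)\vee\phi(b)=\phi(b)$, so injectivity gives $a\vee b=b$, i.e.\ $a\leqslant b$; hence $\phi^{-1}$ is order-preserving. Conversely, a standard argument shows every order isomorphism preserves arbitrary joins: $\phi(\bigvee S)$ is an upper bound of $\phi(S)$, and if $y$ is any upper bound of $\phi(S)$ then $\phi^{-1}(y)$ is an upper bound of $S$, whence $\bigvee S\leqslant\phi^{-1}(y)$ and $\phi(\bigvee S)\leqslant y$. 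The meet version is literally dual (replace $a\vee b$ by $a\wedge b$, and use that order isomorphisms preserve arbitrary meets), so $(b)$ is likewise equivalent to ``$\phi$ is an order isomorphism''; in particular $(a)\Leftrightarrow(b)$, and the same two equivalences hold verbatim for any bijection between frames, in particular for $\phi_*$ once we know it is one.

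Next I bring in $\phi_*$. Assuming $(a)$ (equivalently $(b)$), $\phi$ is an order isomorphism, and from $\phi(a)\leqslant b\Leftrightarrow a\leqslant\phi^{-1}(b)$ one reads off that $\phi^{-1}$ is the right adjoint, i.e.\ $\phi_*=\phi^{-1}$; this is again a bijection and an order isomorphism, so by the previous paragraph applied to $\phi_*$ it preserves arbitrary joins and arbitrary meets, which is $(c)$ and $(d)$. For the converses, note that $\phi_*$, being the right adjoint, exists (and, by Proposition \ref{right adjoint}, satisfies $\phi\circ\phi_*=\mathrm{id}$) only when $\phi$ preserves arbitrary joins, so $(c)$ and $(d)$ already carry $(a)$ with them; more concretely, if one only assumes $(c)$ then $\phi_*$ is a bijection preserving arbitrary joins, hence an order isomorphism by the equivalence above, hence $\phi=\phi_{**}$ (Lemma \ref{preserving and right adjoin}) is an order isomorphism as well and $(a)$ follows, and the same remark applied to $\phi_*$ disposes of $(d)\Rightarrow(a)$.

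The one place that needs care — the ``hard part'' such as it is — is precisely the $\phi_*$ side: $\phi_*$ is only guaranteed to exist and to be well behaved once $\phi$ preserves arbitrary joins, so the argument must be sequenced to pin down $(a)\Leftrightarrow(b)\Leftrightarrow$ ``order isomorphism'' purely in terms of $\phi$, and only afterwards to identify $\phi_*$ with $\phi^{-1}$ and transfer the statement to it. The genuine engine throughout is the injectivity trick $\phi(a)\leqslant\phi(b)\Rightarrow a\leqslant b$ (and its meet-analogue); everything else is bookkeeping with adjunctions.
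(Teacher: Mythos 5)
Your proof is correct and follows essentially the same route as the paper's: the paper's one-line argument rests on the fact that for a bijection any right or left adjoint must be $\phi^{-1}$, which is exactly what you establish (via the equivalence of each condition with ``$\phi$ is an order isomorphism'') before transferring everything to $\phi_{*}=\phi^{-1}$. You simply supply in full the details the paper dismisses as ``easy,'' including the correct observation that $\phi_{*}$ only makes sense once $\phi$ preserves arbitrary joins.
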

	\begin{proof}
		It is easy, by this fact that, if a one-to-one onto map $ \phi $ has either a right or a left adjoint, then $ \phi^{-1} $ is the right and left adjoint.
	\end{proof}
	
	\section{Extensions}
	
	In this section we introduce some maps and, by some examples, show that the categories of these maps are not equal together. Then we prove that they have the most property of the continuous functions. After that, it has been shown that these introduced maps are extensions of the continuous function. Finally, a functor has  been introduced between categories of the topological spaces and the $ LGT $-spaces.
	
	\begin{Def}
		Suppose that $(F_1, \tau_{_1})$ and $(F_2,\tau_{_2})$ are two $LGT$-spaces. An arbitrary join preserve map $\phi : F_1 \rightarrow F_2 $ is called an $ OLG $ ($ CLG $) map if for each $b \in \tau_{_2}$ ($b \in \tau_{_2}^*$), $\phi_*(b) \in \tau_{_1} $ ($\phi_*(b) \in \tau_{_1}^* $). We say  $\phi$ is an $ LG $ map if it is both $ OLG $ and  $ CLG $. 
	\end{Def}
	
	Suppose that $(F_1, \tau_{_1})$ and $(F_2,\tau_{_2})$ are two $LGT$-spaces. It easy to see that, if a map $ \phi  :‌F‌_1 \rightarrow F_2 $ is $ OLG $, then for each $ u \in \tau_{_2} $,
	\[ \phi_*(u) =  \bigvee_{\phi(x) \leqslant u} x = \bigvee_{\substack{\phi(t) \leqslant u \\ t \in \tau_{_1}}} t  = \max \{ t \in \tau_{_1} : \phi(t) \leqslant u \} .\]
	
	\begin{Pro}
		Suppose that $(F, \tau_{_1})$ and $(F,\tau_{_2})$ are two $LGT$-spaces. 
		\begin{itemize}
			\item[(a)] The identity map $ I_{_F} : (F, \tau_{_1}) \rightarrow (F,\tau_{_2}) $ is $ OLG $, \ff $ \tau_{_2} \subseteq \tau_{_1} $.
			\item[(b)] The identity map $ I_{_F} : (F, \tau_{_1}) \rightarrow (F,\tau_{_2}) $ is $ CLG $, \ff $ \tau^*_{_2} \subseteq \tau^*_{_1} $.
		\end{itemize}
		\label{topology order}
	\end{Pro}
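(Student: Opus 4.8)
The plan is to first pin down the right adjoint of the identity map and then simply unwind the definitions of $ OLG $ and $ CLG $.

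First I would note that $ I_{_F} : F \rightarrow F $ is trivially an arbitrary join preserve map, since $ I_{_F}(\bigvee E) = \bigvee E = \bigvee I_{_F}(E) $ for every $ E \subseteq F $, so $ (I_{_F})_* $ is defined. I then claim $ (I_{_F})_* = I_{_F} $. This follows either from the adjunction itself --- for $ a, b \in F $ we have $ I_{_F}(a) \leqslant b \Leftrightarrow a \leqslant b \Leftrightarrow a \leqslant I_{_F}(b) $, and right adjoints are unique --- or directly from Proposition~\ref{right adjoint}(a), which gives $ (I_{_F})_*(b) = \bigvee_{x \leqslant b} x = b $ for every $ b \in F $.

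For (a): by definition $ I_{_F} : (F,\tau_{_1}) \rightarrow (F,\tau_{_2}) $ is $ OLG $ \ff\ for each $ b \in \tau_{_2} $ we have $ (I_{_F})_*(b) \in \tau_{_1} $; since $ (I_{_F})_*(b) = b $, this says precisely that $ b \in \tau_{_1} $ for every $ b \in \tau_{_2} $, i.e.\ $ \tau_{_2} \subseteq \tau_{_1} $. For (b): likewise $ I_{_F} $ is $ CLG $ \ff\ for each $ b \in \tau_{_2}^* $ we have $ (I_{_F})_*(b) = b \in \tau_{_1}^* $, i.e.\ $ \tau_{_2}^* \subseteq \tau_{_1}^* $.

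There is essentially no obstacle; the only point worth care is that part (b) is not a formal consequence of part (a), since in general $ \tau_{_2}^* \subseteq \tau_{_1}^* $ is strictly weaker than $ \tau_{_2} \subseteq \tau_{_1} $ (the spaces need not be $ LT $-spaces, so a closed element need not be open). For the identity map, however, the explicit computation $ (I_{_F})_* = I_{_F} $ makes both equivalences fall out at once.
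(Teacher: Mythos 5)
Your proof is correct and is exactly the ``straightforward'' argument the paper leaves to the reader: identify $(I_{_F})_* = I_{_F}$ via Proposition~\ref{right adjoint}(a) and unwind the definitions of $OLG$ and $CLG$. Your closing remark that (b) is not a formal consequence of (a) in a general $LGT$-space is a worthwhile observation, but there is nothing to add or correct.
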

	\begin{proof}
		They are same and straightforward.
	\end{proof}
	
	In the following example, by the use of the above proposition, we show that a $ CLG $ map need not be an $ OLG $ map.
	
	\begin{Exa}
		Suppose that  $ F‌ = \{ 0 , a , 1 \} $,  $ \tau_{_1} = \{‌ 0 , 1 \} $ is the trivial $LG$-topology and $ \tau_{_2} = \{‌ 0, a , 1 \} $ is the discrete $LG$-topology. Then $ \tau_{_1}^* = \{ 0,1 \} $ and $ \tau_{_2}^* = \{‌ 0 , 1\} $. By the above proposition, $ I_F : (F,\tau_{_1})‌ \rightarrow (F,\tau_{_2}) $ is $ CLG $ and is not $ OLG $.
		\label{clg is not olg}
	\end{Exa}
	
	\begin{Cor}
		Let $ (F,\tau) $ be an $ LGT $-space.
		\begin{itemize}
			\item[(a)] $ \tau $ is the discrete $LG$-topology on $ F $; \ff for each $ LGT $-space $ (F',\tau') $, each arbitrary join preserve map $ \phi : F \rightarrow F' $ is $ OLG $.
			\item[(b)] $ \tau^* = F $; \ff for each $ LGT $-space $ (F',\tau') $, each arbitrary join preserve map $ \phi : F \rightarrow F' $ is $ CLG $.
			\item[(c)] If for each $ LGT $-space $ (F',\tau') $, each arbitrary join preserve map $ \phi : F' \rightarrow F $ is $ OLG $, then $ \tau $ is the trivial $LG$-topology on $ F $.
			\item[(d)] If for each $ LGT $-space $ (F',\tau') $, each arbitrary join preserve map $ \phi : F' \rightarrow F $ is $ CLG $, then $ \tau^* = \{ 0 , 1 \}$.
		\end{itemize}
		\label{discrete and continuous}
	\end{Cor}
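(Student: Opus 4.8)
The plan is to prove each part by applying the hypothesis to one carefully chosen witnessing map, the forward implications of (a) and (b) being immediate. Indeed, if $\tau=F$ then for any arbitrary join preserve $\phi$ out of $F$ and any $b$ in the codomain topology we have $\phi_*(b)\in F=\tau$, so $\phi$ is $OLG$; dually, if $\tau^*=F$ then $\phi_*(b)\in F=\tau^*$ always, so $\phi$ is $CLG$.

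For the converse of (a) I would apply the hypothesis to the identity $I_{_F}:(F,\tau)\to(F,F)$, whose codomain carries the discrete $LG$-topology. This map is arbitrary join preserve with $(I_{_F})_*=I_{_F}$, so the $OLG$ condition reads $b=(I_{_F})_*(b)\in\tau$ for every $b\in F$, i.e. $\tau=F$. Parts (c) and (d) are the mirror situation, where the hypothesis concerns maps \emph{into} $F$: here I would feed it the identity $I_{_F}:(F,\{0,1\})\to(F,\tau)$ from the trivially topologized copy of $F$. The $OLG$ condition then forces $b=(I_{_F})_*(b)\in\{0,1\}$ for every $b\in\tau$; since a subframe always contains $0$ and $1$, this gives $\tau=\{0,1\}$, which is (c). For (d), using that for the trivial topology the set of closed elements is $\{0^*,1^*\}=\{1,0\}$, the $CLG$ condition forces $\tau^*\subseteq\{0,1\}$, and since $0=1^*$ and $1=0^*$ always lie in $\tau^*$, we get $\tau^*=\{0,1\}$.

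The part that resists the identity-map trick, and so is the main obstacle, is the converse of (b): there need not be any $LG$-topology on $F$ whose closed elements exhaust $F$, so the argument for (a) cannot simply be dualized. Instead, for each fixed $a\in F$ I would introduce $\phi_a:F\to\mathbf{2}$, with $\mathbf{2}=\{0,1\}$ the two-element frame, defined by $\phi_a(x)=0$ if $x\leqslant a$ and $\phi_a(x)=1$ otherwise. A routine case check (splitting a given family according to whether all of its members lie below $a$) shows $\phi_a$ is arbitrary join preserve, and since $\phi_a(x)=0$ exactly when $x\leqslant a$, Proposition \ref{right adjoint}(a) gives $(\phi_a)_*(0)=\bigvee\{x: x\leqslant a\}=a$. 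Equipping $\mathbf{2}$ with its unique (trivial, and also discrete) $LG$-topology, whose closed elements are $\{0,1\}$, the hypothesis makes $\phi_a$ a $CLG$ map, whence $a=(\phi_a)_*(0)\in\tau^*$. As $a$ was arbitrary, $\tau^*=F$, completing (b).
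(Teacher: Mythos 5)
Your proof is correct, and for parts (a), (c) and (d) it is exactly the paper's argument: apply the hypothesis to the identity map $ I_{_F} : (F,\tau) \rightarrow (F,F) $ (respectively $ I_{_F} : (F,\{0,1\}) \rightarrow (F,\tau) $) and read off the conclusion via Proposition \ref{topology order}. The genuine divergence is in the converse of (b), which the paper dismisses as ``similar to (a)''. As you rightly point out, the literal dualization fails: taking $ I_{_F} : (F,\tau) \rightarrow (F,F) $ and using Proposition \ref{topology order}(b) only yields $ F^* \subseteq \tau^* $, and $ F^* $ can be a proper subset of $ F $ (in the three-element chain $ \{0,a,1\} $ one has $ F^* = \{0,1\} $), so no choice of $ LG $-topology on $ F $ itself need have all of $ F $ as its closed elements. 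Your characteristic maps $ \phi_a : F \rightarrow \{0,1\} $ repair this: $ \phi_a $ is arbitrary join preserve, $ (\phi_a)_*(0) = \bigvee\{x : x \leqslant a\} = a $, and $ 0 $ is a closed element of the two-element frame, so the hypothesis forces $ a \in \tau^* $ for every $ a \in F $. This is a genuinely different route for (b) and, in fact, a more complete one than the argument the paper gestures at; the price is only the small verification that each $ \phi_a $ preserves arbitrary joins, which you correctly reduce to a case split on whether all members of the family lie below $ a $.
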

	\begin{proof}
		(a $ \Rightarrow $). It is clear.
		
		(a $ \Leftarrow $). Since the identity map $ I_F : (F,\tau) \rightarrow (F,F) $ is $ OLG $, by Proposition \ref{topology order}, $ \tau = F $ is the discrete $LG$-topology on $ F $.
		
		(b). It is similar to (a).
		
		(c). Suppose that $ \tau' $ is the trivial topology on $ F $. Since $ I_F : (F,\tau') \rightarrow (F,\tau) $ is $ OLG $, by Proposition \ref{topology order}, $ \tau = \tau' $.
		
		(d). It is similar to (b).
	\end{proof}
	
	In the following example, we show that the converse of the statements (c) and (d)‌ in the above theorem need not be true.
	
	\begin{Exa}
		Suppose that
		\begin{center}
			\begin{tikzpicture}
			\draw(-2,-1)node{$F_1$};
			\draw(-2,0)node{$0$};
			\draw(-2,0.3)--(-2,0.7);
			\draw(-2,1)node{$a_1$};
			\draw(-2,1.3)--(-2,1.7);
			\draw(-2,2)node{$a_2$};
			\draw(-2,2.3)--(-2,2.7);
			\draw(-2,3)node{$1$};
			
			\draw(2,-1)node{$F_2$};
			\draw(2,0)node{$0$};
			\draw(2,0.3)--(2,0.7);
			\draw(2,1)node{$b_1$};
			\draw(2,1.3)--(2,1.7);
			\draw(2,2)node{$b_2$};
			\draw(2,2.3)--(2,2.7);
			\draw(2,3)node{$1$};
			
			\draw[->](-1.8,2)--(1.8,0.2);	
			\draw[->](-1.8,1)--(1.8,0.1);	
			\draw[->](-1.8,0)--(1.8,0);	
			\draw[->](-1.8,3)--(1.8,3);	
			
			\draw(0,3.3)node{$\phi$};
			\end{tikzpicture}
		\end{center}
		and $ \tau_{_1} = \{0,a_1,1\} $ and $ \tau_{_2} $ is the trivial $LG$-topology. Then $ \tau_{_1}^* = \{‌ 0 ,1 \} $ and $ \tau_{_2}^*=\{0,1\} $. Clearly, $ \phi $ is arbitrary join preserve and it is easy to check that $ \phi  $ is not neither $ OLG $ nor $ CLG $.
	\end{Exa}
	
	\begin{Lem}
		Suppose that $ (F,\tau) $ is an $ LGT $-space. If $ \tau^* = F $, then $ \tau $ is the discrete $LG$-topology.
	\end{Lem}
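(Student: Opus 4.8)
The plan is to show that the hypothesis $\tau^*=F$ is far more restrictive than it looks: it forces the pseudocomplement operation $(\cdot)^*$ on $F$ to be an involution, and this alone collapses $\tau$ onto all of $F$.

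First I would observe that, since $\tau\subseteq F$ and $a^*\in F$ for every $a\in F$, one has the inclusions $\tau^*\subseteq\{a^*:a\in F\}\subseteq F$; as $\tau^*=F$ by hypothesis, all three sets coincide, so every element of $F$ is a pseudocomplement, say $a=b^*$ with $b\in F$. Next I would invoke the identity $x^{***}=x^*$, which holds in any frame: applying the order-reversing map $(\cdot)^*$ to $x\leqslant x^{**}$ gives $x^{***}\leqslant x^*$, while applying $y\leqslant y^{**}$ with $y=x^*$ gives $x^*\leqslant x^{***}$. Then from $a=b^*$ we get $a^{**}=b^{***}=b^*=a$, so \emph{every} element of $F$ is regular; in particular $(\cdot)^*$ is injective on $F$, because $x^*=y^*$ forces $x=x^{**}=y^{**}=y$.

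Finally, for an arbitrary $a\in F$ I would note that $a^*\in F=\tau^*$, so $a^*=t^*$ for some $t\in\tau$; applying $(\cdot)^{**}$ and using regularity of both $a$ and $t$ gives $a=a^{**}=t^{**}=t\in\tau$. Hence $F\subseteq\tau$, i.e. $\tau=F$ is the discrete $LG$-topology. I do not expect a genuine obstacle here: the only real content is recognizing that $\tau^*=F$ secretly says that $F$ is a complete Boolean algebra with $(\cdot)^*$ as its complementation involution, after which everything reduces to routine manipulation of the standard identity $x^{***}=x^*$. (Note that this argument never uses that $\tau$ is a subframe, only that $\tau\subseteq F$, so the statement in fact holds for any subset $\tau$ of $F$ with $\tau^*=F$.)
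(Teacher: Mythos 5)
Your argument is correct and rests on the same key fact as the paper's proof, namely the identity $x^{***}=x^*$ (which you prove directly where the paper cites it), combined with writing elements of $F$ as iterated pseudocomplements of elements of $\tau$. The paper chains $a=t^*$, $t=u^*$, $u=v^*$ to get $a=v^{***}=v^*=u\in\tau$, while you first note every element is regular and then conclude; this is a cosmetic reorganization of the same argument, and your observation that the subframe hypothesis on $\tau$ is never used is accurate.
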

	\begin{proof}
		Suppose that $ a \in F $, then $ t,u,v \in \tau $ exist such that $ a = t^* $, $ t = u^* $ and $ u = v^* $, then, by \cite[Remark 1.1]{aliabad2015lg}, 
		\[ a =  t^* = u^{**} = v^{***} = v^* = u \in \tau \]
		Thus $ \tau = F $ is the discrete $LG$-topology.   
	\end{proof}
	
	Now the above corollary and the above lemma conclude the following corollary.
	
	\begin{Cor}
		Suppose that $ (F,\tau) $ is an $ LGT $-space. If for each $ LGT $-space $ (F',\tau') $, each arbitrary join preserve is $ CLG $, then $ \tau $ is the discrete $LG$-topology.
	\end{Cor}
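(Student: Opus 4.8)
The plan is to chain together the two immediately preceding results, exactly as the connecting sentence advertises, so that no genuinely new argument is needed. First I would observe that the hypothesis of the corollary --- that for every $LGT$-space $(F',\tau')$ each arbitrary join preserve map $\phi : F \rightarrow F'$ is $CLG$ --- is literally the right-hand side of the biconditional in Corollary~\ref{discrete and continuous}(b). Invoking that equivalence in the direction ($\Leftarrow$) immediately yields $\tau^* = F$. Here it is worth checking carefully that the quantifier and the direction of the maps match: Corollary~\ref{discrete and continuous}(b) is stated for maps with domain $F$, which is precisely the configuration at hand, so the application is legitimate.

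Second, I would feed the conclusion $\tau^* = F$ into the Lemma stated just before this corollary, whose conclusion is exactly that $\tau$ is the discrete $LG$-topology. Concatenating the two implications --- hypothesis $\Rightarrow \tau^* = F \Rightarrow \tau$ discrete --- delivers the claim in one line.

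I do not anticipate any real obstacle; the only substantive content lies inside the quoted Lemma, whose proof picks, for an arbitrary $a \in F$, elements $t,u,v \in \tau$ with $a = t^*$, $t = u^*$, $u = v^*$ and then collapses $a = t^{*} = u^{**} = v^{***} = v^{*} = u \in \tau$ using the identity $x^{***}=x^{*}$ from \cite[Remark 1.1]{aliabad2015lg}. Since that Lemma may be assumed, the present proof reduces to correctly citing Corollary~\ref{discrete and continuous}(b) and then the Lemma, and the main point to be careful about is simply that the hypothesis is transcribed in the exact form required to trigger the biconditional.
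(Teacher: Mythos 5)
Your proposal is correct and is exactly the argument the paper intends: the connecting sentence before the corollary says it follows from Corollary \ref{discrete and continuous}(b) (giving $\tau^* = F$) combined with the preceding lemma (giving that $\tau$ is discrete). Nothing further is needed.
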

	
	In the following example we give an $ OLG $ map which is not $ CLG $ map and we show that the converse of the above corollary need not be true.  
	
	\begin{Exa}
		Suppose that
		\begin{center}
			\begin{tikzpicture}
			\draw(2,-1)node{$F_1$};
			\draw(2,0)node{$0$};
			\draw(2,0.3)--(2,0.7);
			\draw(2,1)node{$a_1$};
			\draw(2,1.3)--(2,1.7);
			\draw(2,2)node{$a_2$};
			\draw(2,2.3)--(2,2.7);
			\draw(2,3)node{$1$};
			
			\draw(9,-1)node{$F_2$};
			\draw(9,0)node{$0$};
			\draw(8.8,0)--(8,0.8);
			\draw(9.2,0)--(10,0.8);
			\draw(10,1)node{$b_2$};
			\draw(8,1)node{$b_1$};
			\draw(8.8,2)--(8,1.2);
			\draw(9.2,2)--(10,1.2);
			\draw(9,2)node{$b_3$};
			\draw(9,2.2)--(9,2.8);
			\draw(9,3)node{$1$};
			\end{tikzpicture}
		\end{center} 
		and $ \tau_{_1} = F_1 $ and $ \tau_{_2} = F_2 $ are the trivial $ LG $-topologies. Then the map $ I_F : F_1 \rightarrow F_2 $, defined by $ \phi(0) = 0 $, $ \phi(a_1) = b_1 $, $ \phi(a_2) = b_3 $ and $ \phi(1) = 1 $ is an $ OLG $ map, by Corollary \ref{discrete and continuous}. But $ b_2 \in \{ 0 , b_1 , b_2 , 1 \} =  \tau_{_2}^* $ and $ \bigvee_{\phi(x) \leqslant b_2} x = a_1 \notin \{ 0 , 1 \} = \tau_{_1}^* $, so $ \phi $ is not $ CLG $ map.
	\end{Exa}
	
	\begin{Pro}
		Suppose that $(F_1, \tau_{_1})$ and $(F_2,\tau_{_2})$ are two $LGT$-spaces.  If $\phi : F_1 \rightarrow F_2$ is an $ OLG $ map and $ \phi_*(a^*) = \big(\phi_*(a)\big)^* $, then $\phi$ is an $ LG $ map.
		\label{OLG is LG}
	\end{Pro}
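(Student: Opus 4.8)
The plan is to verify directly that $\phi$ is a $CLG$ map; since $\phi$ is already assumed to be $OLG$, this is all that remains in order to conclude that $\phi$ is an $LG$ map. So the whole argument reduces to checking that $\phi_*(b) \in \tau_{_1}^*$ for every closed element $b \in \tau_{_2}^*$.

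First I would fix $b \in \tau_{_2}^*$ and, by the very definition of $\tau_{_2}^*$, write $b = t^*$ for some open element $t \in \tau_{_2}$. Applying the hypothesis $\phi_*(a^*) = \big(\phi_*(a)\big)^*$ with $a = t$ gives $\phi_*(b) = \phi_*(t^*) = \big(\phi_*(t)\big)^*$. Now, because $\phi$ is an $OLG$ map and $t \in \tau_{_2}$, we have $\phi_*(t) \in \tau_{_1}$, and hence $\big(\phi_*(t)\big)^* \in \tau_{_1}^*$ by the definition of $\tau_{_1}^*$. Combining these, $\phi_*(b) = \big(\phi_*(t)\big)^* \in \tau_{_1}^*$. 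Since $b$ was an arbitrary closed element of $F_2$, $\phi$ is $CLG$, and being both $OLG$ and $CLG$, it is an $LG$ map.

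I do not expect a genuine obstacle here: the statement is essentially the observation that the commutation $\phi_* \circ (\,\cdot\,)^* = (\,\cdot\,)^* \circ \phi_*$ turns the pullback of open elements into the pullback of closed elements. The only points deserving a moment's attention are that the hypothesis is needed only for $a \in \tau_{_2}$ (its being stated for all $a$ is harmless), that $\phi_*$ exists because $\phi$, as an $OLG$ map, is arbitrary join preserve, and that no auxiliary frame identities (such as $t^{**} = t$) are required, since a closed element is by definition already of the form $t^*$.
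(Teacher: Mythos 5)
Your argument is correct and is precisely the intended one: the paper dismisses this proposition with ``It is clear,'' and your spelled-out verification (write $b = t^*$, apply the commutation hypothesis, and use the $OLG$ property of $\phi$) is exactly the obvious proof being alluded to. No issues.
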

	\begin{proof}
		It is clear.
	\end{proof}
	
	\begin{Thm}
		Suppose that $(F_1, \tau_{_1})$ and $(F_2,\tau_{_2})$ are two $LGT$-spaces and $\phi : F_1 \rightarrow F_2$ is an arbitrary join preserve map. Then the followings are equivalent.
		\begin{enumerate}
			\item[(a)] $\phi$ is $ CLG $.
			\item[(b)] $\phi(\overline{a}) \leqslant \overline{\phi(a)}$, for each $ a \in F_1 $.
			\item[(c)] $ \overline{\phi_*(b)} \leqslant \phi_*(\overline{b}) $, for each $ b \in F_2 $.
		\end{enumerate}
		\label{closed and continuous}
	\end{Thm}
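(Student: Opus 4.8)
The plan is to establish the cycle (a) $\Rightarrow$ (b) $\Rightarrow$ (c) $\Rightarrow$ (a). The only tools needed are the following elementary facts, all available from the excerpt: an arbitrary join preserving map $\phi$ is monotone; the adjunction $\phi(x)\leqslant y \Leftrightarrow x\leqslant\phi_*(y)$, which in particular yields $\phi(\phi_*(y))\leqslant y$ (and $x\leqslant\phi_*(\phi(x))$); the closure operator is monotone with $\overline{a}\in\tau^*$ for every $a$; and $\overline{f}=f$ whenever $f\in\tau^*$ (since $f$ itself occurs in the defining meet).

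For (a) $\Rightarrow$ (b), fix $a\in F_1$. The element $\overline{\phi(a)}$ lies in $\tau_{_2}^*$ and satisfies $\phi(a)\leqslant\overline{\phi(a)}$, so the adjunction gives $a\leqslant\phi_*\big(\overline{\phi(a)}\big)$. Because $\phi$ is $CLG$, $\phi_*\big(\overline{\phi(a)}\big)\in\tau_{_1}^*$, hence it is one of the closed elements above $a$ whose meet defines $\overline{a}$; therefore $\overline{a}\leqslant\phi_*\big(\overline{\phi(a)}\big)$. Applying the monotone map $\phi$ and then $\phi\circ\phi_*\leqslant\mathrm{id}$ gives $\phi(\overline{a})\leqslant\phi\big(\phi_*(\overline{\phi(a)})\big)\leqslant\overline{\phi(a)}$.

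For (b) $\Rightarrow$ (c), fix $b\in F_2$ and apply (b) to $a=\phi_*(b)$: since $\phi(\phi_*(b))\leqslant b$ and closure is monotone, $\phi\big(\overline{\phi_*(b)}\big)\leqslant\overline{\phi(\phi_*(b))}\leqslant\overline{b}$, and the adjunction converts this to $\overline{\phi_*(b)}\leqslant\phi_*(\overline{b})$. For (c) $\Rightarrow$ (a), fix $b\in\tau_{_2}^*$, so $\overline{b}=b$; then (c) gives $\overline{\phi_*(b)}\leqslant\phi_*(\overline{b})=\phi_*(b)$, while $\phi_*(b)\leqslant\overline{\phi_*(b)}$ holds in any $LGT$-space. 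Hence $\phi_*(b)=\overline{\phi_*(b)}\in\tau_{_1}^*$, so $\phi$ is $CLG$.

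None of these steps involves a real computation; the one requiring care is (a) $\Rightarrow$ (b), where the closure of $a$ has to be trapped beneath a closed element manufactured by pulling $\overline{\phi(a)}$ back through $\phi_*$. The point is to recognize that $\phi_*\big(\overline{\phi(a)}\big)$ is admissible in the defining meet of $\overline{a}$ exactly because $\phi$ is $CLG$, and then to use only the one-sided identity $\phi\circ\phi_*\leqslant\mathrm{id}$ (so the full equality of Proposition \ref{right adjoint}(c) is not even needed) to return to $F_2$. Elsewhere the adjunction and monotonicity do the work mechanically.
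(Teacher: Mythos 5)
Your proof is correct and follows essentially the same route as the paper's: the cycle (a) $\Rightarrow$ (b) $\Rightarrow$ (c) $\Rightarrow$ (a), with the same adjunction and monotonicity arguments at each step (your use of $\phi\circ\phi_*\leqslant\mathrm{id}$ in (a) $\Rightarrow$ (b) is just the adjunction read in the other direction). No substantive difference.
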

	\begin{proof}
		(a $\Rightarrow$ b). Since $ \phi(a) \leqslant \overline{\phi(a)} $, $ a \leqslant \phi_*(\overline{\phi(a)}) $. Since $ \overline{\phi(a)} \in \tau_{_2}^*$, $ \phi_*(\overline{\phi(a)}) \in \tau_{_2}^* $, so $ \overline{a} \leqslant \phi_*(\overline{\phi(a)}) $ and therefore $ \phi(\overline{a}) \leqslant \overline{\phi(a)} $.
		
		(b $\Rightarrow$ c). Since $ \phi_*(b) \leqslant \phi_*(b) $, $ \phi(\phi_*(b)) \leqslant b $, then by the assumption
		\[ \phi( \overline{\phi_*(b)}) \leqslant \overline{\phi(\phi_*(b))} \leqslant \overline{b}  \quad \Rightarrow \quad \overline{\phi_*(b)} \leqslant \phi_*(\overline{b})   \]
		
		(c $\Rightarrow$ a). For each $ t \in \tau_{_2}  $, $ \overline{\phi_*(t^*)} \leqslant \phi_*(\overline{t^*}) = \phi_*(t^*)  $, so $ \overline{\phi_*(t^*)} = \phi_*(t^*) $, hence $  \phi_*(t^*) \in \tau_{_1}^* $. Consequently, $ \phi $ is a $ CLG $ map.
	\end{proof}     
	
	\begin{Cor}
		Suppose that  $(F_1, \tau_{_1})$ and $(F_2,\tau_{_2})$ are two $LGT$-spaces and $\phi : F_1 \rightarrow F_2$ is an onto $ CLG $ map. If $ a \in F_1 $ is dense in $ F_1 $, then $ \phi(a) $ is dense in $ F_2 $.
	\end{Cor}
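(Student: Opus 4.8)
The plan is to unwind the definition of density and reduce everything to part (b) of Theorem \ref{closed and continuous}. Recall that $a$ is dense in $F_1$ means $\overline{a}=1$, and what we must show is $\overline{\phi(a)}=1$. Since $\overline{\phi(a)}\leqslant 1$ always holds, it suffices to produce the reverse inequality $1\leqslant\overline{\phi(a)}$.

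First I would check that $\phi(1)=1$. This is where surjectivity enters: $1$ is the top of $F_1$, so $1=\bigvee F_1$, and since $\phi$ is arbitrary join preserve, $\phi(1)=\phi\big(\bigvee F_1\big)=\bigvee\phi(F_1)$. Because $\phi$ is onto, $\phi(F_1)=F_2$, hence $\bigvee\phi(F_1)=\bigvee F_2=1$; therefore $\phi(1)=1$.

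Next I would invoke Theorem \ref{closed and continuous}: since $\phi$ is $CLG$, the equivalent condition (b) gives $\phi(\overline{x})\leqslant\overline{\phi(x)}$ for every $x\in F_1$, in particular for $x=a$. Chaining the pieces together,
\[
1=\phi(1)=\phi(\overline{a})\leqslant\overline{\phi(a)}\leqslant 1,
\]
so $\overline{\phi(a)}=1$, i.e. $\phi(a)$ is dense in $F_2$.

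I do not anticipate a genuine obstacle here; the statement is essentially a one-line consequence of Theorem \ref{closed and continuous}(b) once the harmless observation $\phi(1)=1$ is recorded. The only point that needs the hypothesis "onto" is precisely that last observation, and I would make sure to flag it so the reader sees why surjectivity is assumed.
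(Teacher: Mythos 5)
Your proof is correct and follows exactly the route the paper intends: the paper's proof is just "It follows easily from the above theorem," and you have supplied the details — applying Theorem \ref{closed and continuous}(b) to $a$ and using surjectivity plus join-preservation to get $\phi(1)=1$.
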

	\begin{proof}
		It follows easily from the above theorem.
	\end{proof}

	\begin{Thm}
		Suppose that $(F_1, \tau_{_1})$ and $(F_2,\tau_{_2})$ are two $LGT$-spaces and $\phi : F_1 \rightarrow F_2$ is an arbitrary join preserve map. Then $\phi$ is $ OLG $; \ff $ \phi_*(b^\circ) \leqslant \phi_*(b)^\circ $, for each $ b \in F_2 $.
		\label{open and continuous}
	\end{Thm}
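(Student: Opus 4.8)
The plan is to mirror the proof of Theorem \ref{closed and continuous}, exploiting the fact that interior plays for $OLG$ maps exactly the role that closure plays for $CLG$ maps. Throughout I will use that $\phi_*$ is monotone: if $b_1 \leqslant b_2$ in $F_2$, then $\phi(\phi_*(b_1)) \leqslant b_1 \leqslant b_2$, whence $\phi_*(b_1) \leqslant \phi_*(b_2)$ by the adjunction (equivalently, $\phi_*$ is arbitrary meet preserve and hence order preserving). I will also use the elementary facts that $t^\circ = t$ for every open element $t$ and $a^\circ \leqslant a$ for every $a$.

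For the forward implication, assume $\phi$ is $OLG$ and fix $b \in F_2$. Since $b^\circ \in \tau_{_2}$ and $\phi$ is $OLG$, we obtain $\phi_*(b^\circ) \in \tau_{_1}$. Moreover $b^\circ \leqslant b$ gives $\phi_*(b^\circ) \leqslant \phi_*(b)$ by monotonicity, so $\phi_*(b^\circ)$ is an \emph{open} element lying below $\phi_*(b)$. By the definition of the interior of $\phi_*(b)$ as the join of all open elements below it, this yields $\phi_*(b^\circ) \leqslant \phi_*(b)^\circ$.

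For the converse, assume $\phi_*(b^\circ) \leqslant \phi_*(b)^\circ$ for every $b \in F_2$ and let $t \in \tau_{_2}$. Then $t^\circ = t$, hence
\[ \phi_*(t) = \phi_*(t^\circ) \leqslant \phi_*(t)^\circ \leqslant \phi_*(t), \]
so $\phi_*(t) = \phi_*(t)^\circ \in \tau_{_1}$. As $t \in \tau_{_2}$ was arbitrary, $\phi$ is $OLG$.

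I do not expect a genuine obstacle here: the argument is the order-dual of Theorem \ref{closed and continuous}. The only point that requires a moment of care is the same one as there, namely that one must invoke the $OLG$ hypothesis precisely to know that $\phi_*(b^\circ)$ is itself open (so that it legitimately competes in the supremum defining $\phi_*(b)^\circ$), rather than merely satisfying $\phi_*(b^\circ) \leqslant \phi_*(b)$. Everything else reduces to monotonicity of $\phi_*$ together with the identities $t^\circ = t$ and $a^\circ \leqslant a$.
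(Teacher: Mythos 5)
Your proof is correct and follows essentially the same route as the paper's: in the forward direction both arguments use the $OLG$ hypothesis to get $\phi_*(b^\circ)\in\tau_{_1}$ and monotonicity of $\phi_*$ to place this open element below $\phi_*(b)$, hence below $\phi_*(b)^\circ$; the converse is verbatim the paper's computation $\phi_*(t)=\phi_*(t^\circ)\leqslant\phi_*(t)^\circ\leqslant\phi_*(t)$. No gaps.
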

	\begin{proof}
		$\Rightarrow$). Since $ b^\circ \in \tau_{_2} $, $ \phi_*(b^\circ) \in \tau_{_1} $, so 
		\[ \phi_*(b^\circ) = \phi_*(b^\circ)^\circ \leqslant \phi_*(b)^\circ \]
		
		$\Leftarrow$). For each $t \in \tau_{_2}$, $ \phi_*(t) = \phi_*(t^\circ) \leqslant \phi_*(t)^\circ $, so $ \phi_*(t) = \phi_*(t)^\circ \in \tau_{_1}  $, and therefore $ \phi $ is $ OLG $.
	\end{proof}
	
	\begin{Thm}
		Suppose that $(F_1, \tau_{_1})$, $(F_2,\tau_{_2})$ and $(F_3,\tau_{_3})$ are $LGT$-spaces. 
		\begin{itemize}
			\item[(a)] If $\phi : F_1 \rightarrow F_2 $ and $ \psi: F_2 \rightarrow F_3$ are $ OLG $ maps, then $\psi \circ \phi: F_1 \rightarrow F_3 $ is an $ OLG $ map.
			\item[(b)]If $\phi : F_1 \rightarrow F_2 $ and $ \psi: F_2 \rightarrow F_3$ are $ CLG $ maps, then $\psi \circ \phi: F_1 \rightarrow F_3 $ is a $ CLG $ map.
			\item[(c)]If $\phi : F_1 \rightarrow F_2 $ and $ \psi: F_2 \rightarrow F_3$ are $ LG $ maps, then $\psi \circ \phi: F_1 \rightarrow F_3 $ is an $ LG $ map.
		\end{itemize}
		\label{composition of continuous}
	\end{Thm}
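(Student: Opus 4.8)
The plan is to reduce all three parts to Proposition \ref{right adjoint}(d), which records two facts we need: a composite of arbitrary join preserve maps is again arbitrary join preserve, and the right adjoint of a composite is the composite of the right adjoints in the reverse order, i.e.\ $(\psi \circ \phi)_* = \phi_* \circ \psi_*$ for $\phi : F_1 \to F_2$ and $\psi : F_2 \to F_3$. Once this identity is available, each of (a), (b), (c) is a short chase through the definitions of $OLG$ and $CLG$.

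For (a), suppose $\phi$ and $\psi$ are both $OLG$. By Proposition \ref{right adjoint}(d) the map $\psi \circ \phi$ is arbitrary join preserve, so it makes sense to ask whether it is $OLG$. Fix $b \in \tau_{_3}$. Since $\psi$ is $OLG$ we get $\psi_*(b) \in \tau_{_2}$, and then, since $\phi$ is $OLG$, $\phi_*\big(\psi_*(b)\big) \in \tau_{_1}$. But $\phi_*\big(\psi_*(b)\big) = (\psi \circ \phi)_*(b)$ by the adjoint-of-composite identity, so $(\psi \circ \phi)_*(b) \in \tau_{_1}$, which is exactly what is required for $\psi \circ \phi$ to be $OLG$. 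Part (b) is verbatim the same argument with $\tau_{_i}$ replaced throughout by $\tau_{_i}^*$: for $b \in \tau_{_3}^*$ one has $\psi_*(b) \in \tau_{_2}^*$ because $\psi$ is $CLG$, hence $\phi_*\big(\psi_*(b)\big) = (\psi \circ \phi)_*(b) \in \tau_{_1}^*$ because $\phi$ is $CLG$.

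For (c), recall that by definition an $LG$ map is one that is simultaneously $OLG$ and $CLG$; so if $\phi$ and $\psi$ are $LG$, they are in particular both $OLG$ and both $CLG$, and applying (a) and (b) shows $\psi \circ \phi$ is both $OLG$ and $CLG$, i.e.\ $LG$. There is no real obstacle in this proof: the only points demanding a little care are to keep straight that the adjoint-of-composite identity reverses the order of composition, and to check that the three frames and their $LG$-topologies line up so that Proposition \ref{right adjoint} applies with $F_1, F_2, F_3$ in the roles used there — both of which are immediate from the hypotheses.
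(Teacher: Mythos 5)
Your proof is correct. It is worth noting that it takes a more direct route than the paper: you argue straight from the definition of $OLG$/$CLG$ together with the identity $(\psi \circ \phi)_* = \phi_* \circ \psi_*$ from Proposition \ref{right adjoint}(d), chasing an open (resp.\ closed) element of $\tau_{_3}$ back through $\psi_*$ and then $\phi_*$. The paper instead routes both parts through its equivalent characterizations: part (a) is verified via the interior inequality of Theorem \ref{open and continuous}, and part (b) via the closure inequality $\phi(\overline{a}) \leqslant \overline{\phi(a)}$ of Theorem \ref{closed and continuous}, so that part (b) in the paper never touches the right adjoints at all. Both arguments are short and sound; yours has the advantage of needing only Proposition \ref{right adjoint} and the raw definitions (and makes the order reversal in the adjoint of a composite explicit, which is the one place a sign error could creep in), while the paper's version illustrates that the interior/closure characterizations compose just as continuity does in the classical setting. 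Part (c) is handled identically in both.
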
 
	\begin{proof}
		(a). By Proposition \ref{right adjoint} and Theorem \ref{open and continuous}, for every $ b \in F_2 $, we have 
		\[ ( \phi \circ \psi)_* (a^\circ) =  \psi_*(\phi_*(a^\circ))  \leqslant \big(\psi_*(\phi_*(a))\big)^\circ = \big( ( \phi \circ \psi)_* (a) \big)^\circ \]
		Hence, by Theorem \ref{open and continuous}, $ \psi \circ \phi $ is an $ OLG $ map.
		
		(b). By Theorem \ref{closed and continuous}, for every $ a \in F_1 $, we have $ \psi(\phi(\overline{a})) \leqslant \psi(\overline{\phi(a)}) \leqslant \overline{\psi(\phi(a))} $. Hence, by Theorem \ref{closed and continuous}, $ \psi \circ \phi $ is a $ CLG $ map.
		
		(c). It follows immediately from (a) and (b).
	\end{proof}
	
	In the following proposition, we show that the $ CLG $, $ OLG $ and $ LG $ maps are extensions of the continuous function.
	
	Suppose that $ f : X \rightarrow Y $; we put $ \mho(f) : \mathcal{P}(X) \rightarrow \mathcal{P}(Y)  $, defined by $ \mho(f) (S) = f(S) $, for each $ S \in \mathcal{P}(X) $. 
	
	\begin{Pro}
		If $ (X,\tau) $ and $ (Y,\tau') $ are topological spaces and $f:X \rightarrow Y $, then the followings are equivalent:
		\begin{itemize}
			\item[(a)] $\mho(f)$ is an $ OLG $ map.
			\item[(b)] $\mho(f)$ is a $ CLG $ map.
			\item[(c)] $\mho(f)$ is an $ LG $ map.
			\item[(d)] $f$ a is continuous function.
		\end{itemize}
		\label{extension of continuous}
	\end{Pro}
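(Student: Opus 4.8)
The plan is to reduce everything to the familiar behaviour of the preimage map $f^{-1}$. First I would record the structural facts: $\mathcal P(X)$ and $\mathcal P(Y)$ are frames (in fact complete Boolean algebras), a topology is precisely a subframe of the corresponding power set (closed under finite meets and arbitrary joins, containing $0=\emptyset$ and $1$), so $(\mathcal P(X),\tau)$ and $(\mathcal P(Y),\tau')$ are genuine $LGT$-spaces. Moreover, in a power set the pseudocomplement $a^*=\bigvee a^\perp$ coincides with the set-theoretic complement, since $a^\perp=\{S:S\cap a=\emptyset\}$ has supremum $X\setminus a$; hence $\tau^*$ is exactly the family of $\tau$-closed subsets of $X$ and $(\tau')^*$ the family of $\tau'$-closed subsets of $Y$. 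Finally, because the image operation preserves arbitrary unions, $\mho(f)$ is an arbitrary join preserve map, so $\mho(f)_*$ exists and the three notions $OLG$, $CLG$, $LG$ all make sense for it.

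The key computation is that $\mho(f)_*=f^{-1}$. By Proposition \ref{right adjoint}(b), $\mho(f)_*(T)=\max\{S\subseteq X: f(S)\subseteq T\}$; and since $f(S)\subseteq T$ is equivalent to $S\subseteq f^{-1}(T)$, this maximum is exactly $f^{-1}(T)$.

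With this identification the equivalences fall out. The map $\mho(f)$ is $OLG$ iff $f^{-1}(V)=\mho(f)_*(V)\in\tau$ for every $V\in\tau'$, which is the definition of continuity of $f$; this is (a)$\Leftrightarrow$(d). Likewise $\mho(f)$ is $CLG$ iff $f^{-1}(C)\in\tau^*$ for every $C\in(\tau')^*$, i.e. iff $f^{-1}$ sends closed sets to closed sets; since $f^{-1}$ commutes with complementation, $f^{-1}(Y\setminus V)=X\setminus f^{-1}(V)$, and $\tau^*$, $(\tau')^*$ are the closed-set families, this is equivalent to $f^{-1}$ sending open sets to open sets, i.e. to continuity of $f$, giving (b)$\Leftrightarrow$(d). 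Finally (c) is by definition the conjunction of (a) and (b), so (c)$\Leftrightarrow$(d) follows.

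I do not anticipate a real obstacle: the entire content is the identification $\mho(f)_*=f^{-1}$ together with the translation of ``$\tau^*$'' into ``closed sets'', after which the statement is just the standard fact that $f$ is continuous iff preimages of open sets are open iff preimages of closed sets are closed. The only point needing a moment's care is verifying that the pseudocomplement in a power set frame is the Boolean complement, so that the closed elements of the $LGT$-space $(\mathcal P(X),\tau)$ genuinely coincide with the topologically closed subsets of $X$.
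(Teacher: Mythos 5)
Your proposal is correct and follows the same route as the paper: the whole argument rests on the identification $\mho(f)_*(T)=\bigcup_{f(A)\subseteq T}A=f^{-1}(T)$, after which all four statements reduce to the standard characterizations of continuity. Your additional check that the pseudocomplement in a power-set frame is the Boolean complement (so that $\tau^*$ really is the family of closed sets) is a detail the paper leaves implicit, but it does not change the argument.
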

	\begin{proof}
		Since $\mho(f)_* (S) = \bigvee_{\mho(f)(A) \subseteq S} A = \bigcup_{f(A) \subseteq S} A = f^{-1}(S)  $, the above statements are equivalent.
	\end{proof}
	
	For each topological space $ (X,\tau) $, we put $ \mho(X,\tau) = (\mathcal{P}(X),\tau) $. Now we can conclude the following corollary from Theorem \ref{composition of continuous},  Proposition \ref{extension of continuous} and this fact that $ \mho(I_{_X}) = I_{_{\mathcal{P}(X)}} = I_{_{\mho(X)}} $.
	
	\begin{Cor}
		If $ \mathbf{Top} $ is the category of topological spaces and continuous maps and $ \mathbf{Lgt} $ is the category of $ LGT $-spaces and $ LG $ maps, then $ \mho : \mathbf{Top} \rightarrow \mathbf{Lgt} $ is a functor.
	\end{Cor}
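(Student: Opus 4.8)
The plan is to verify the three things a functor must do: send objects of $\mathbf{Top}$ to objects of $\mathbf{Lgt}$, send morphisms to morphisms, and preserve identities and composition. Two of these have essentially been settled by the preceding results, so most of the work is bookkeeping, and the only point needing care is that the target $\mathbf{Lgt}$ really is a category.

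First I would check that $\mho$ lands in $\mathbf{Lgt}$ on objects. For a topological space $(X,\tau)$, the collection $\tau$ is closed under finite intersections and arbitrary unions, i.e. under finite meets and arbitrary joins in the frame $\mathcal{P}(X)$; hence $\tau$ is a subframe of $\mathcal{P}(X)$ and $\mho(X,\tau) = (\mathcal{P}(X),\tau)$ is an $LGT$-space. On morphisms, if $f : X \to Y$ is continuous, then $\mho(f)$ is arbitrary join preserving because the direct image preserves unions, $f\big(\bigcup_{i} S_i\big) = \bigcup_{i} f(S_i)$, and since $f(A) \subseteq S$ iff $A \subseteq f^{-1}(S)$ its right adjoint is $\mho(f)_* = f^{-1}$. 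Proposition \ref{extension of continuous} then gives that $\mho(f)$ is an $LG$ map, hence a morphism $\mho(X,\tau) \to \mho(Y,\tau')$ in $\mathbf{Lgt}$.

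Next I would confirm that $\mathbf{Lgt}$ is a genuine category in which these assignments make sense: the composite of two $LG$ maps is an $LG$ map by Theorem \ref{composition of continuous}(c), and for any $LGT$-space $(F,\tau)$ the identity $I_F$ is an $LG$ map (it is arbitrary join preserving with $(I_F)_* = I_F$, so $(I_F)_*$ carries $\tau$ into $\tau$ and $\tau^*$ into $\tau^*$). Functoriality is then immediate: $\mho(I_X)(S) = I_X(S) = S$ for all $S \subseteq X$, so $\mho(I_X) = I_{\mathcal{P}(X)} = I_{\mho(X,\tau)}$; and for composable continuous maps $f : X \to Y$ and $g : Y \to Z$ we have $\mho(g \circ f)(S) = (g \circ f)(S) = g(f(S)) = \mho(g)\big(\mho(f)(S)\big)$ for every $S \subseteq X$, whence $\mho(g \circ f) = \mho(g) \circ \mho(f)$.

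I do not expect a real obstacle here; the only mildly delicate point is making sure $\mathbf{Lgt}$ is well-defined as a category, which is precisely what Theorem \ref{composition of continuous} supplies once one notes (routinely) that identities are $LG$ maps. Everything else reduces to the elementary fact that forming direct images is itself functorial on power sets.
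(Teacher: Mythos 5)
Your proof is correct and follows essentially the same route as the paper, which derives the corollary from Proposition \ref{extension of continuous}, Theorem \ref{composition of continuous}, and the identity $\mho(I_X)=I_{\mathcal{P}(X)}$. You simply spell out the bookkeeping more explicitly (in particular the elementary fact that direct images compose, which gives $\mho(g\circ f)=\mho(g)\circ\mho(f)$), which the paper leaves implicit.
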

	
	\section{Subspace}
	
	This section has been devoted to studying the relations between the $ OLG $, $ CLG $ and $ LG $ maps and the subspaces. It has been shown some versions of the relations between a continuous function and a subspace satisfy for these extension. Also, it has been gave some counterexamples for some general versions which do not satisfy. 
	
	\begin{Lem}
		Suppose that $(F, \tau)$ and $(F',\tau')$ are two $LGT$-spaces, $ a \in F $ and $\phi : (F,\tau) \rightarrow (F',\tau')$. Then $ (\phi|_{_{F_a}})_* (y) = \phi_*(y) \wedge a $, for each $ y \in F' $.
		\label{adjoint restriction}
	\end{Lem}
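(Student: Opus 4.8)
The plan is to show that $\phi_*(y)\wedge a$ satisfies the defining property of the right adjoint of $\phi|_{_{F_a}}$ and then appeal to the uniqueness of adjoints.

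First I would record that $\phi|_{_{F_a}}\colon F_a\to F'$ is again an arbitrary join preserving map, so that the symbol $(\phi|_{_{F_a}})_*$ is legitimate: for any $E\subseteq F_a=\,\downarrow a$, the join $\bigvee E$ computed in $F$ is already $\leqslant a$, hence it is also the join of $E$ in the frame $F_a$, and therefore $\phi|_{_{F_a}}(\bigvee E)=\phi(\bigvee E)=\bigvee\phi(E)$. Thus $(\phi|_{_{F_a}})_*$ exists and is the unique right adjoint of $\phi|_{_{F_a}}$.

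Now fix $y\in F'$ and put $z=\phi_*(y)\wedge a$. Since $z\leqslant a$ we have $z\in F_a$, so $z$ is a legitimate candidate value for $(\phi|_{_{F_a}})_*(y)$. For any $x\in F_a$ (so $x\leqslant a$), the adjunction $\phi\dashv\phi_*$ gives
\[
\phi|_{_{F_a}}(x)\leqslant y \;\Leftrightarrow\; \phi(x)\leqslant y \;\Leftrightarrow\; x\leqslant\phi_*(y) \;\Leftrightarrow\; x\leqslant\phi_*(y)\wedge a=z,
\]
where the last equivalence uses $x\leqslant a$. Hence the map $y\mapsto\phi_*(y)\wedge a$ is a right adjoint of $\phi|_{_{F_a}}$, and by uniqueness $(\phi|_{_{F_a}})_*(y)=\phi_*(y)\wedge a$. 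Alternatively, one may compute directly from Proposition \ref{right adjoint}(a): $(\phi|_{_{F_a}})_*(y)=\bigvee\{x\in F_a:\phi(x)\leqslant y\}$; the inequality $\leqslant\phi_*(y)\wedge a$ holds since each such $x$ satisfies $x\leqslant a$ and $x\leqslant\phi_*(y)$, while the reverse inequality holds because $z\in F_a$ and $\phi(z)\leqslant\phi(\phi_*(y))\leqslant y$.

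There is no real obstacle here; the lemma is routine. The only point that deserves a line of justification is the first step — that passing to the subspace $F_a$ leaves arbitrary joins unchanged — since this is exactly what makes $\phi|_{_{F_a}}$ arbitrary join preserving and $(\phi|_{_{F_a}})_*$ well defined. I would also note that no assumption on $\phi$ beyond being arbitrary join preserving is used; in particular $\phi$ need not be $OLG$ or $CLG$, and the inequality $\phi(\phi_*(y))\leqslant y$ invoked above is just the counit of the adjunction.
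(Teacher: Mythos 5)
Your proof is correct and follows essentially the same route as the paper's: both establish the chain of equivalences $\phi|_{_{F_a}}(x)\leqslant y \Leftrightarrow \phi(x)\leqslant y \Leftrightarrow x\leqslant \phi_*(y) \Leftrightarrow x\leqslant \phi_*(y)\wedge a$ for $x\in F_a$ and conclude by uniqueness of the right adjoint. Your extra remark that joins in $F_a$ agree with joins in $F$, so that $\phi|_{_{F_a}}$ is indeed arbitrary join preserving, is a worthwhile justification that the paper leaves implicit.
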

	\begin{proof}
		For each $ x \in F_a $, $ x \leqslant a  $, then 
		\begin{align*}
		x \leqslant (\phi|_{_{F_a}})_* (y)  & \quad \Leftrightarrow \quad \phi|_{_{F_a}}(x) \leqslant y \quad \Leftrightarrow \quad  \phi(x) \leqslant y  \\
		& \quad \Leftrightarrow \quad x \leqslant \phi_*(y) \quad \Leftrightarrow \quad x \leqslant \phi_*(y)  \wedge a 
		\end{align*}
		Hence $ (\phi|_{_{F_a}})_* (y) = \phi_*(y) \wedge a $.	
	\end{proof}
	
	\begin{Thm}
		Suppose that $(F, \tau)$ and $(F',\tau')$ are two $LGT$-spaces, $ a \in F $ and $\phi : (F,\tau) \rightarrow (F',\tau')$.
		\begin{itemize}
			\item[(a)] 	If $ \phi $ is an $ OLG $ map, then $\phi|_{_{F_a}} : (F_a,\tau_a) \rightarrow (F',\tau')$ is an $ OLG $ map.
			\item[(b)] 	If $ a \in F^* $ and $ \phi $ is a $ CLG $ map, then $\phi|_{_{F_a}} : (F_a,\tau_a) \rightarrow (F',\tau')$ is a $ CLG $ map.
			\item[(c)] 	If $ a \in F^* $ and $ \phi $ is an $ LG $ map, then $\phi|_{_{F_a}} : (F_a,\tau_a) \rightarrow (F',\tau')$ is an $ LG $ map.
		\end{itemize}
		\label{restriction of continuous}
	\end{Thm}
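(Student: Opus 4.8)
The plan is to rest the argument on two ingredients: Lemma~\ref{adjoint restriction}, which gives $(\phi|_{_{F_a}})_*(y) = \phi_*(y) \wedge a$ for every $y \in F'$, and the description of the closed elements of the subspace, namely $\tau_a^{*} = \{\, f \wedge a : f \in \tau^{*} \,\}$. The latter is the only point that needs genuine work, so I would settle it first. For $y \leqslant a$, the pseudocomplement of $y$ computed in the frame $F_a = \downarrow a$ equals $y^{*} \wedge a$ (with $y^{*}$ on the right taken in $F$): on one hand, every $z \leqslant a$ with $z \wedge y = 0$ satisfies $z \leqslant y^{*}$, hence $z \leqslant y^{*} \wedge a$; on the other hand, $y^{*} \wedge a = (\bigvee y^{\perp}) \wedge a = \bigvee_{z \in y^{\perp}}(z \wedge a)$ by the frame law, and each $z \wedge a$ is below $a$ with $(z \wedge a) \wedge y = 0$. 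Feeding $y = s \wedge a$ (for $s \in \tau$) into this, and using that $(s \wedge a)^{*} \wedge a$ meets $s$ trivially and so lies below $s^{*}$, one gets $(s \wedge a)^{*} \wedge a = s^{*} \wedge a$; hence the closed elements of $(F_a, \tau_a)$ are exactly $\{\, (s \wedge a)^{*} \wedge a : s \in \tau \,\} = \{\, s^{*} \wedge a : s \in \tau \,\} = \{\, f \wedge a : f \in \tau^{*} \,\}$.

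With these in hand the three parts are immediate. For (a): $\phi|_{_{F_a}}$ is arbitrary join preserve since joins in $F_a$ coincide with joins in $F$, and for $u \in \tau'$ we have $\phi_*(u) \in \tau$ (because $\phi$ is $OLG$), so $(\phi|_{_{F_a}})_*(u) = \phi_*(u) \wedge a \in \tau_a$ straight from the definition of $\tau_a$. For (b): for $b \in {\tau'}^{*}$ we have $\phi_*(b) \in \tau^{*}$ (because $\phi$ is $CLG$), so $(\phi|_{_{F_a}})_*(b) = \phi_*(b) \wedge a \in \{\, f \wedge a : f \in \tau^{*} \,\} = \tau_a^{*}$. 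Part (c) is the conjunction of (a) and (b).

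The main obstacle is exactly the subspace identity $\tau_a^{*} = \{\, f \wedge a : f \in \tau^{*} \,\}$: whereas the open elements of $F_a$ are $\{ s \wedge a : s \in \tau \}$ essentially by fiat, the closed elements are controlled by the pseudocomplement of $\downarrow a$, which a priori differs from that of $F$, so one must check that passing to $\downarrow a$ creates no closed element outside $\{ f \wedge a : f \in \tau^{*} \}$. Note that this computation uses nothing about $a$, so (b) and (c) in fact hold for every $a \in F$ and the hypothesis $a \in F^{*}$ is not needed (presumably it is kept for parallelism with other subspace results). If one prefers to keep that hypothesis, an alternative proof of (b) verifies criterion (b) of Theorem~\ref{closed and continuous} directly: for $x \leqslant a$ one has $\overline{x}^{F_a} \leqslant \overline{x} \wedge a \leqslant \overline{x}$ (since $\overline{x} \wedge a \in \tau_a^{*}$ and dominates $x$), whence $\phi|_{_{F_a}}(\overline{x}^{F_a}) = \phi(\overline{x}^{F_a}) \leqslant \phi(\overline{x}) \leqslant \overline{\phi(x)}$ by monotonicity of $\phi$ and Theorem~\ref{closed and continuous}(b).
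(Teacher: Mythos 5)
Your proof is correct, and its skeleton is the same as the paper's: Lemma~\ref{adjoint restriction} reduces everything to knowing that $\phi_*(u)\wedge a\in\tau_a$ for $u$ open and $\phi_*(b)\wedge a\in\tau_a^*$ for $b$ closed. The difference is in how the second fact is obtained. The paper simply invokes \cite[Proposition 3.4]{aliabad2015lg} and declares (b) ``similar to (a)''; you instead derive the needed identity $\tau_a^*=\{f\wedge a: f\in\tau^*\}$ from scratch, via $y^{*_{F_a}}=y^*\wedge a$ for $y\leqslant a$ and $(s\wedge a)^*\wedge a=s^*\wedge a$. Both computations check out (for the second, note also the easy reverse inequality $s^*\wedge a\leqslant (s\wedge a)^*\wedge a$, which you leave implicit), and they make the argument self-contained. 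The most interesting payoff of your route is the observation that nothing in this computation uses $a\in F^*$: within the definitions of this paper, the closed elements of $(F_a,\tau_a)$ are $\{f\wedge a: f\in\tau^*\}$ for \emph{every} $a\in F$, so parts (b) and (c) hold without that hypothesis, and the subsequent corollary about complemented frames becomes unnecessary as a separate statement. (Your closing remark is slightly off on one point: the ``alternative proof'' of (b) via Theorem~\ref{closed and continuous}(b) still relies on $\overline{x}\wedge a\in\tau_a^*$, i.e.\ on the same subspace identity, so it does not actually reinstate any use of the hypothesis $a\in F^*$ either.)
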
 
	\begin{proof}
		(a). Suppose that $ t \in \tau' $, then $ \phi_*(t) \in \tau $, so by Lemma \ref{adjoint restriction}, $ (\phi|_{_{F_a}})_* (t) = \phi_*(t) \wedge a  $, hence $ (\phi|_{_{F_a}})_* (t)  \in F_a $. Consequently, $ \phi|_{_{F_a}} $ is $ OLG $. 
		
		(b). By \cite[Proposition 3.4]{aliabad2015lg}, the proof is similar to the proof of part (a).
		
		(c). It follows immediately from parts (a) and (b).
	\end{proof}
	
	Clearly, since each element of a complemented frame is a complement of some element, the above theorem implies the following corollary.
	
	\begin{Cor}
		Suppose that $(F, \tau)$ and $(F',\tau')$ are two $LGT$-spaces, $ a \in F $ and $\phi : (F,\tau) \rightarrow (F',\tau')$. If $ \phi $ is an $ LG $ map and $ F $ is complemented, then $\phi|_{_{F_a}} : (F_a,\tau_a) \rightarrow (F',\tau')$ is an $ LG $ map.
	\end{Cor}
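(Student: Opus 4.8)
The plan is to reduce the statement directly to Theorem \ref{restriction of continuous}(c); the only thing needing justification is the parenthetical remark preceding the corollary, namely that in a complemented frame every element of $F$ lies in $F^* = \{ t^* : t \in F \}$.

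First I would recall from the preliminaries that whenever an element $b$ of a frame has a complement $b^c$, one has $b^* = b^c$. Now let $a \in F$ be arbitrary, with $F$ complemented. Then $a$ has a complement $a^c$, and since complements are unique and the relation ``$b$ is the complement of $c$'' is symmetric, the complement of $a^c$ is $a$. Applying the recalled fact to the element $a^c$ gives $(a^c)^* = (a^c)^c = a$, so $a = (a^c)^* \in F^*$.

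With $a \in F^*$ in hand, the hypothesis that $\phi$ is an $LG$ map puts us exactly in the situation of Theorem \ref{restriction of continuous}(c), which yields that $\phi|_{_{F_a}} : (F_a,\tau_a) \to (F',\tau')$ is an $LG$ map, as desired. I do not foresee any genuine obstacle: all the substance is in the one-line observation $F = F^*$ for complemented $F$, and the rest is a verbatim appeal to the already-established restriction theorem. (If one preferred a self-contained argument bypassing $a \in F^*$, one could instead re-run the proof of Theorem \ref{restriction of continuous}(b), replacing the use of \cite[Proposition 3.4]{aliabad2015lg} by the direct verification that closure in the subspace $F_a$ behaves well when $a$ is complemented; but this is strictly more work and not needed here.)
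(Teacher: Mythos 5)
Your proposal is correct and matches the paper's own argument: the paper justifies this corollary precisely by the remark that in a complemented frame every element is the complement (hence the pseudocomplement) of some element, so $a\in F^*$, and then invokes Theorem \ref{restriction of continuous}(c). Your explicit verification that $a=(a^c)^*\in F^*$ just spells out that one-line observation.
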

	
	\begin{Lem}
		Suppose that $(F,\tau)$ is an $ LGT $-space and $a \in \tau$.
		\begin{itemize}
			\item[(a)] For each $t \in F_a$, $t \in \tau_a$, \ff  $t \in \tau$.
			\item[(b)] For each $b \in F_{a^*}$, $b \in \tau_{a^*}$, \ff $b \in \tau^*$. 
		\end{itemize}
		\label{open and closed element in open and closed subspace}
	\end{Lem}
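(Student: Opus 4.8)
The plan is to dispatch the two parts separately; part (a) is purely formal. In the ``if'' direction, if $t\in F_a$ and $t\in\tau$, then $t\leqslant a$ gives $t=t\wedge a\in\tau_a$; in the ``only if'' direction, if $t\in\tau_a$ then $t=s\wedge a$ for some $s\in\tau$, and since $a\in\tau$ and $\tau$, being a subframe, is closed under finite meets, $t\in\tau$. I foresee no obstacle here.

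For (b) --- which I read as the assertion that the closed elements of the subspace $(F_{a^*},\tau_{a^*})$ are precisely the closed elements of $F$ lying below $a^*$ --- the first step is to identify $\tau_{a^*}^*$. I would record the general fact that for any $c\in F$ and any $g\leqslant c$ the pseudocomplement of $g$ computed inside the frame $F_c=\downarrow c$ is $g^*\wedge c$, where $g^*$ is the pseudocomplement in $F$: in a frame $g^\perp=\{x\in F:x\wedge g=0\}$ is closed under joins and so has largest element $g^*$, whence $\{x\leqslant c:x\wedge g=0\}$ has largest element $g^*\wedge c$. Applying this with $c=a^*$ and $g=s\wedge a^*$ for $s\in\tau$, together with the identity $(s\wedge a^*)^*\wedge a^*=s^*\wedge a^*$ --- which follows from $s^*\leqslant(s\wedge a^*)^*$ (pseudocomplementation reverses order) and from $(s\wedge a^*)^*\wedge a^*\wedge s=0$, the latter forcing $(s\wedge a^*)^*\wedge a^*\leqslant s^*$ --- gives $\tau_{a^*}^*=\{\,s^*\wedge a^*:s\in\tau\,\}=\{\,f\wedge a^*:f\in\tau^*\,\}$. (Alternatively one may quote \cite[Proposition 3.4]{aliabad2015lg} for this identification.)

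With this description of $\tau_{a^*}^*$ in place, (b) follows at once. If $b\in\tau_{a^*}^*$, write $b=f\wedge a^*$ with $f\in\tau^*$; since $a\in\tau$ we have $a^*\in\tau^*$, and $\tau^*$ is closed under (arbitrary, in particular finite) meets, so $b\in\tau^*$. Conversely, if $b\in F_{a^*}$ and $b\in\tau^*$, then $b\leqslant a^*$ gives $b=b\wedge a^*\in\{\,f\wedge a^*:f\in\tau^*\,\}=\tau_{a^*}^*$. The one genuinely substantive step is the preliminary computation of $\tau_{a^*}^*$, and the only pitfall there is keeping track of whether a pseudocomplement is being taken in $F$ or in $F_{a^*}$; everything else is routine bookkeeping.
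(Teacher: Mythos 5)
Your proof is correct and follows essentially the same route as the paper: part (a) is the same direct computation, and for part (b) the paper simply invokes \cite[Proposition 3.4]{aliabad2015lg} for the identification $\tau_{a^*}^* = \{\, f \wedge a^* : f \in \tau^* \,\}$, which is exactly the fact you prove from scratch (correctly, via the relative pseudocomplement formula $(s\wedge a^*)^*\wedge a^* = s^*\wedge a^*$) before concluding with the closure of $\tau^*$ under meets. Your reading of (b) as a statement about $\tau_{a^*}^*$ rather than the literal $\tau_{a^*}$ (an apparent typo in the statement) matches the paper's intent, as signalled by the lemma's title and the symmetry with part (a).
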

	\begin{proof}
		(a). It is straightforward.
		
		(b). By \cite[Proposition 3.4]{aliabad2015lg}, the proof is similar to part (a).	
	\end{proof} 
	
	\begin{Thm}
		Suppose that $ \phi : F \rightarrow F' $ is an arbitrary join preserve map.
		\begin{itemize}
			\item[(a)] If $(F, \tau)$ and $(F',\tau')$ are two $LGT$-spaces, $ s , t \in \tau_{_1} $, $ \phi|_{F_s} $ and $ \phi|_{F_t} $ are $ OLG $ maps and $ s \vee t = 1 $, then $ \phi $ is an $ OLG $ map.
			\item[(b)] If $(F, \tau)$ and $(F',\tau')$ are two $LT$-spaces, $ a , b \in \tau_{_1}^* $, $ \phi|_{F_a} $ and $ \phi|_{F_b} $ are $ CLG $ maps and $ a \vee b = 1 $, then $ \phi $ is a $ CLG $ map.
		\end{itemize}
	\end{Thm}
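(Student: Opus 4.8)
The plan is to treat both parts in parallel, since (b) is the formal dual of (a). Fix an open element $u \in \tau'$ in case (a), or a closed element $v \in \tau'^*$ in case (b); in either case the goal is to show that $\phi_*(u) \in \tau$ (resp. $\phi_*(v) \in \tau^*$), which is exactly what it means for $\phi$ to be $OLG$ (resp. $CLG$).

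First I would transport the hypotheses to the two subspaces. In case (a): since $\phi|_{F_s}$ and $\phi|_{F_t}$ are $OLG$, Lemma~\ref{adjoint restriction} gives $\phi_*(u) \wedge s = (\phi|_{F_s})_*(u) \in \tau_s$ and $\phi_*(u) \wedge t = (\phi|_{F_t})_*(u) \in \tau_t$; and because $s,t \in \tau$, Lemma~\ref{open and closed element in open and closed subspace}(a) says $\tau_s \subseteq \tau$ and $\tau_t \subseteq \tau$, so both meets are in fact open elements of $F$. In case (b): since $\phi|_{F_a}$ and $\phi|_{F_b}$ are $CLG$, Lemma~\ref{adjoint restriction} gives $\phi_*(v) \wedge a = (\phi|_{F_a})_*(v) \in \tau_a^*$ and $\phi_*(v) \wedge b = (\phi|_{F_b})_*(v) \in \tau_b^*$; and because $a,b \in \tau^*$, the closed-subspace counterpart of Lemma~\ref{open and closed element in open and closed subspace}(b) — obtained via \cite[Proposition 3.4]{aliabad2015lg} — yields $\tau_a^* \subseteq \tau^*$ and $\tau_b^* \subseteq \tau^*$, so both meets are closed elements of $F$.

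Next I would glue. Using the frame law (binary meet distributes over binary join) together with the hypothesis $s \vee t = 1$, respectively $a \vee b = 1$,
\[ \phi_*(u) = \phi_*(u) \wedge (s \vee t) = (\phi_*(u) \wedge s) \vee (\phi_*(u) \wedge t), \qquad \phi_*(v) = (\phi_*(v) \wedge a) \vee (\phi_*(v) \wedge b). \]
In case (a), $\tau$ is a subframe of $F$, hence closed under finite joins, so $\phi_*(u) \in \tau$ and $\phi$ is $OLG$. In case (b), this is precisely where the $LT$-hypothesis enters: $\tau^*$ is then a sublattice of $F$, hence closed under finite joins, so $\phi_*(v) \in \tau^*$ and $\phi$ is $CLG$.

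The one delicate point is the inclusion $\tau_a^* \subseteq \tau^*$ for a closed element $a$: one needs the closed elements of the closed subspace $(F_a,\tau_a)$ to be already closed in the ambient space. This is false for an arbitrary $a \in F$, which is exactly why the hypothesis in (b) is $a,b \in \tau^*$ rather than $a,b \in \tau$, and it is supplied by \cite[Proposition 3.4]{aliabad2015lg}; everything else is the distributive-law computation above. I would also note that weakening ``$LT$-space'' to ``$LGT$-space'' in (b) breaks the final join step, since $\tau^*$ need not be closed under finite joins in general.
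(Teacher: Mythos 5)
Your proposal is correct and follows essentially the same route as the paper: apply Lemma \ref{adjoint restriction} to identify $\phi_*(u)\wedge s$ and $\phi_*(u)\wedge t$ with the adjoints of the restrictions, then glue via $\phi_*(u)=\phi_*(u)\wedge(s\vee t)$ and the distributive law. You in fact spell out part (b) more carefully than the paper (which only says it is ``similar''), correctly locating both the role of \cite[Proposition 3.4]{aliabad2015lg} and the point where the $LT$-hypothesis is needed.
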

	\begin{proof}
		(a). Suppose that $ u \in \tau' $. Then $ (\phi|_{_{F_s}})_* (u) , (\phi|_{_{F_t}})_*(u) \in \tau $, thus, by Lemma \ref{adjoint restriction},
		\begin{align*}
		\phi_*(u)  & = \phi_*(u) \wedge ( s \vee t ) \\
		& = ( \phi_*(u) \wedge s ) \vee ( \phi_*(u) \wedge t ) \\
		& = (\phi|_{_{F_s}})_*(u) \vee (\phi|_{_{F_t}})_*(u) \in \tau
		\end{align*}
		Hence $ \phi $ is $ OLG $.
		
		(b). Since $(F,\tau)$ and $(F',\tau')$ are two $LT$-spaces, the proof is similar to the proof of part (a).
	\end{proof}
	
	\begin{Pro}
		Suppose that $ (F,\tau) $ and $ (F',\tau') $ are two $ LGT $-spaces. If $ \phi : F' \rightarrow F $ is an $ OLG $ map such that $ \phi(F') = F_a $, for some $ a \in F $, then $ \phi : F \rightarrow F_a $ is $ OLG $.
	\end{Pro}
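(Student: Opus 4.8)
The statement is really about the corestriction of $\phi$ to its image: since $\phi(F') = F_a$, the map $\phi$ may be regarded as a map $\psi : (F',\tau') \to (F_a,\tau_a)$ agreeing with $\phi$ on each element, and the claim is that $\psi$ is $OLG$. The plan is to verify the two clauses of the definition directly, i.e. that $\psi$ is arbitrary join preserve and that $\psi_*(t) \in \tau'$ for every $t \in \tau_a$. First I would note that suprema computed in $F_a$ agree with those computed in $F$: any $S \subseteq F_a$ has $a$ as an upper bound in $F$, hence $\bigvee_F S \leqslant a$, so $\bigvee_F S \in F_a$ is also the supremum of $S$ in $F_a$. Since $\phi$ is arbitrary join preserve and $\phi(F') = F_a$, this immediately gives that $\psi$ is arbitrary join preserve. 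Next I would identify $\psi_*$: for $y \in F_a$ the condition $\phi(x) \leqslant y$ reads the same in $F$ as in $F_a$, so Proposition \ref{right adjoint}(a) gives $\psi_*(y) = \bigvee_{\phi(x) \leqslant y} x = \phi_*(y)$; that is, $\psi_*$ is the restriction of $\phi_*$ to $F_a$.

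The computational heart of the argument is the identity $\phi_*(s \wedge a) = \phi_*(s)$ for every $s \in \tau$. One inequality is just monotonicity of $\phi_*$. For the reverse, apply the adjunction to $\phi_*(s) \leqslant \phi_*(s)$ to get $\phi(\phi_*(s)) \leqslant s$; since moreover $\phi(\phi_*(s)) \in \phi(F') = F_a$, we have $\phi(\phi_*(s)) \leqslant a$, so $\phi(\phi_*(s)) \leqslant s \wedge a$, and the adjunction again yields $\phi_*(s) \leqslant \phi_*(s \wedge a)$. This is exactly the place where the hypothesis $\phi(F') = F_a$ is used.

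Finally, a generic open element of $\tau_a$ has the form $t = s \wedge a$ with $s \in \tau$, and then $\psi_*(t) = \phi_*(s \wedge a) = \phi_*(s)$, which lies in $\tau'$ because $\phi : F' \to F$ is $OLG$ and $s \in \tau$. Hence $\psi_*(\tau_a) \subseteq \tau'$ and $\psi$ is $OLG$. I do not anticipate a real obstacle: the only points needing a little care are the coincidence of suprema in $F_a$ and in $F$ (needed so that the corestriction is genuinely arbitrary join preserve) and the reduction $\phi_*(s \wedge a) = \phi_*(s)$; everything else is bookkeeping.
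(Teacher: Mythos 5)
Your proof is correct, and it pivots on the same key identity as the paper's proof, namely $\phi_*(t\wedge a)=\phi_*(t)$ for $t\in\tau$, but you derive that identity by a different mechanism. The paper writes $\phi_*(t\wedge a)=\phi_*(t)\wedge\phi_*(a)$ (using that the right adjoint of an arbitrary join preserve map is arbitrary meet preserve) and then observes $\phi(1)=\bigvee\phi(F')=a$, hence $\phi_*(a)=1$, which kills the second factor. You instead use the counit $\phi(\phi_*(t))\leqslant t$ of the adjunction together with $\phi(\phi_*(t))\in\phi(F')=F_a$ to get $\phi(\phi_*(t))\leqslant t\wedge a$, whence $\phi_*(t)\leqslant\phi_*(t\wedge a)$; combined with monotonicity this gives the same equality. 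Both routes use the hypothesis $\phi(F')=F_a$ at exactly one point, and neither is more general than the other. What your write-up adds is the bookkeeping the paper silently skips: that joins in $F_a$ coincide with joins in $F$, so the corestriction $\psi:F'\to F_a$ is still arbitrary join preserve, and that $\psi_*$ is $\phi_*$ restricted to $F_a$ (so that computing $\phi_*$ on elements of $\tau_a$ really does compute the right adjoint of the corestricted map). These points are needed for the statement to make sense as written, and making them explicit is a genuine improvement over the paper's two-line argument.
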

	\begin{proof}
		Suppose that $ s \in \tau_a $, then $ t \in \tau $ exists such that $ s = t \wedge a $. Since $ \phi(1) = a $, $ \phi_*(a) = 1 $, so 
		\[  \phi_*(s) = \phi_*(t \wedge a) = \phi_*(t) \wedge \phi_*(a) = \phi_*(t) \wedge 1 = \phi_*(t) \in \tau' \] 
		Hence $ \phi : F' \rightarrow F_a  $ is $ OLG $.
	\end{proof}
	
	In the following example we show that the $ LG $ map image of a compact $ LGT $-space need not be compact.
	
	\begin{Exa}
		Suppose that $ F_2 = [0,1] $ with ordinary relation and $ F_1 = [0,\frac{1}{2}] \cup \{ a , b , 1 \} $ with the following illustrated relation 
		\begin{center}
			\begin{tikzpicture}
			\draw(3,-1.2)node{$F_1$};
			\draw(3,-0.7)node{$0$};
			\draw(3,0.5)node[rotate=90]{$(0,\frac{1}{2})$};
			\draw[dotted](3,-0.3)--(3,1.8);
			\draw(3,2)node{$\frac{1}{2}$};
			\draw(3.2,2)--(4,3)node[above]{$a$};
			\draw(2.8,2)--(2,3)node[above]{$b$};
			\draw(4,3.5)--(3.2,4.5);
			\draw(2,3.5)--(2.8,4.5);
			\draw(3,4.6)node{$1$};
			\end{tikzpicture}
		\end{center}
		It is easy to check that $ F_1 $ and $ F_2 $ are frames. Consider $ \tau_{_1} = F_1 $ and $ \tau_{_2} = F_2 $. Clearly $ \tau_{_1}^* = \{‌0,1,a,b\} $, $ \tau_{_2}^* = \{ 0 ,1 \} $,  $ (F_1,\tau_{_1}) $ is a compact $ LGT $-space, $ (F_2,\tau_{_2}) $ is not a compact $ LGT $-space and the map $ \phi : (F_1,\tau_{_1}) \rightarrow (F_2,\tau_{_2}) $, defined by
		\[ \phi(x) = \begin{cases}
		1 & x \notin [0,\frac{1}{2}] \\
		2x & x \in [0,\frac{1}{2}]
		\end{cases} \]
		is an onto $ LG $ map.
	\end{Exa}
	
	\begin{Pro}
		Suppose that $ (F_1,\tau_{_1}) $ and $ (F_2,\tau_{_2}) $ are two $ LGT $-spaces, $ \phi : F_1 \rightarrow F_2 $ is an onto $ OLG $ and $ \bigvee_{\alpha \in A} u_\alpha = 1 $ implies that $ \bigvee_{\alpha \in A} \phi_*(u_\alpha) = 1 $, for every subfamily $ \{ u_\alpha \}_{\alpha \in A}  $ of $ \tau_{_2} $. 
		\begin{itemize}
			\item[(a)] If $ F_1 $ is compact, then $ F_2 $ is compact.
			\item[(b)] If $ F_1 $ is countably compact, then $ F_2 $ is countably compact.
			\item[(c)] If $ F_1 $ is Lindel\"of , then $ F_2 $ is Lindelo\"f.
		\end{itemize}
		\label{image of compact}
	\end{Pro}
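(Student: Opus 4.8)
The plan is to pull an open cover of $F_2$ back along $\phi_*$, apply the compactness hypothesis on $F_1$, and then push the resulting small subcover forward along $\phi$. First I would isolate the two elementary facts that drive the argument: by Proposition \ref{right adjoint}(c) we have $\phi(\phi_*(b)) = b$ for every $b \in F_2$, and since $\phi$ is onto (or simply since $\phi$ is order preserving and $\phi(\phi_*(1)) = 1$) we have $\phi(1) = 1$.

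For part (a), let $\{ t_\alpha \}_{\alpha \in A} \subseteq \tau_{_2}$ satisfy $\bigvee_{\alpha \in A} t_\alpha = 1$. The hypothesis on $\phi$ gives $\bigvee_{\alpha \in A} \phi_*(t_\alpha) = 1$, and since $\phi$ is $OLG$ each $\phi_*(t_\alpha)$ lies in $\tau_{_1}$; thus $\{ \phi_*(t_\alpha) \}_{\alpha \in A}$ is a cover of $F_1$ by open elements. Compactness of $F_1$ produces $\alpha_1,\dots,\alpha_n \in A$ with $\bigvee_{i=1}^n \phi_*(t_{\alpha_i}) = 1$. Applying $\phi$ and using that it is arbitrary join preserve together with the two facts above,
\[ 1 = \phi(1) = \phi\Big( \bigvee_{i=1}^n \phi_*(t_{\alpha_i}) \Big) = \bigvee_{i=1}^n \phi(\phi_*(t_{\alpha_i})) = \bigvee_{i=1}^n t_{\alpha_i}, \]
so $\{ t_{\alpha_i} \}_{i=1}^n$ is a finite subcover and $F_2$ is compact.

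Parts (b) and (c) require no new idea. For (c), starting from an arbitrary family $\{ t_\alpha \}_{\alpha \in A} \subseteq \tau_{_2}$ with join $1$, the family $\{ \phi_*(t_\alpha) \}_{\alpha \in A}$ is an open cover of the Lindel\"of space $F_1$, hence admits a countable subcover indexed by some countable $A' \subseteq A$; applying $\phi$ exactly as above gives $\bigvee_{\alpha \in A'} t_\alpha = 1$. For (b) one does the same with $A$ already countable and ``countable subcover'' replaced by ``finite subcover''. The identities $\phi \circ \phi_* = \mathrm{id}_{F_2}$ and $\phi(1) = 1$, and the hypothesis on $\phi$, are all indifferent to the cardinality of the index set, so the arguments are identical to that of (a).

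I do not anticipate a real obstacle: the only subtle point is the hypothesis $\bigvee_{\alpha} u_\alpha = 1 \Rightarrow \bigvee_{\alpha} \phi_*(u_\alpha) = 1$, which is needed precisely because $\phi_*$, being a right adjoint, preserves meets but not joins, and so pulling a cover back along $\phi_*$ need not yield a cover without it. The ``hard part'', such as it is, amounts to recognizing that this hypothesis together with the $OLG$ property is exactly what is required, after which the proof is a one-line diagram chase through Proposition \ref{right adjoint}.
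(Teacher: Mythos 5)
Your proof is correct and takes essentially the same route as the paper's: pull the cover back along $\phi_*$ (open by the $OLG$ hypothesis, still a cover by the stated join hypothesis), extract a finite or countable subcover in $F_1$, and push it forward with $\phi$ using $\phi(\phi_*(u_\alpha)) = u_\alpha$ from Proposition \ref{right adjoint}. The only difference is that you make explicit the facts $\phi(1)=1$ and $\phi \circ \phi_* = \mathrm{id}_{F_2}$, which the paper uses silently.
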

	\begin{proof}
		(a). Suppose that $ \{ u_\alpha \}_{\alpha \in A} $ is a family of open elements of $ F_2 $ and $ \bigvee_{ \alpha \in A } u_\alpha = 1 $. Since $ \phi $ is an $ OLG $ map, $ t_\alpha  = \phi_*(u_\alpha ) \in \tau_\alpha  $. By the assumption, $ \bigvee_{ \alpha \in A } t_\alpha = 1 $. Since $ F_1 $ is compact, there are $ \alpha_1, \alpha_2, \ldots, \alpha_n \in A  $ such that $ 1 = \bigvee_{i=1}^n  t_{\alpha_i} $, so 
		\[ 1 = \phi\Big(‌\bigvee_{i=1}^n t_{\alpha_i}  \Big) = \bigvee_{i=1}^n \phi(t_{\alpha_i}) = \bigvee_{i=1}^n u_{\alpha_i}. \]
		Consequently, $ F_2 $ is compact.
		
		(b) and (c). They are similar to (a).
	\end{proof}
	
	Suppose that, $ (‌F_1,\tau_{_1}) $ and $ (F_2,\tau_{_2}) $ are two frames, $ \phi : F_1 \rightarrow F_2 $ is an arbitrary join preserve map and $ B $ is a base for $ \tau_{_2} $. In  the following example we show that if $ \phi_*(b) \in \tau_{_1} $,  for each $ b \in B $, then $ \phi $ need not be $ OLG $.  
	
	\begin{Exa}
		Suppose that 
		\begin{center}
			\begin{tikzpicture}
			\draw(3,-1)node{$F_1$};
			\draw(3,0)node{$0$};
			\draw(3,0.8)node[rotate=90]{$(0,\frac{1}{2})$};
			\draw[dotted](3,0.2)--(3,1.8);
			\draw(3,2)node{$\frac{1}{2}$};
			\draw(3.2,2)--(4,2.8)node[above]{$a$};
			\draw(2.8,2)--(2,2.8)node[above]{$b$};
			\draw(4,3.3)--(3.2,4.1);
			\draw(2,3.3)--(2.8,4.1);
			\draw(3,4.1)node{$c$};
			\draw(3,4.3)--(3,4.7)node[above]{$1$};
			
			\draw(9,-1)node{$F_2$};
			\draw(9,0)node{$0$};
			\draw(8.8,0)--(8,0.8);
			\draw(9.2,0)--(10,0.8);
			\draw(10,1)node{$d$};
			\draw(8,1)node{$e$};
			\draw(8.8,2)--(8,1.2);
			\draw(9.2,2)--(10,1.2);
			\draw(9,2)node{$f$};
			\draw(9,2.2)--(9,2.8);
			\draw(9,3)node{$1$};
			\end{tikzpicture}
		\end{center}
		$ \tau_{_2} = F_2 $, $ \tau_{_1} = [0,\frac{1}{2}]  \cup \{ 1 \} $, $ B = \{ 0,d,e,1 \} $ and $ \phi : F_1 \rightarrow F_2 $ is defined by $ \phi|_{[0,\frac{1}{2}]} = 0 $, $ \phi(1) = 1 $ and $  \phi(a) = \phi(b) = \phi(c) = f $. Then $ \phi_*(1) = c \notin \tau_{_1} $, so $ \phi $ is not $ OLG $, but $ \phi_*(x) \in \tau_{_1} $, for each $ x \in B $.
	\end{Exa}
	
	We finish this section by a proposition, in which, by adding a condition; we show that  if $ \phi_*(b) $ is an open element, for each element $ b $ of a base, then $ \phi $ is $ OLG $.
	
	\begin{Pro}
		Suppose that $ (‌F_1,\tau_{_1}) $ and $ (F_2,\tau_{_2}) $ are two frame, $ \phi : F_1 \rightarrow F_2 $ an arbitrary join preserve map, $ B $ is a base for $ \tau_{_2} $ and $ \phi_*(\bigvee_{\alpha \in A} b_\alpha) = \bigvee_{\alpha \in A} \phi_*(b_\alpha) $, for each subfamily $ \{ b_\alpha \}_{\alpha \in A} $ of $ B $. If $ \phi_*(b) \in \tau_{_1} $, for each $ b \in B $, then $ \phi $ is $ OLG $.
		\label{base and continuous}
	\end{Pro}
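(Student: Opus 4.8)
The plan is to unwind the definition of $OLG$ directly: I need to show that $\phi_*(t)\in\tau_{_1}$ for every $t\in\tau_{_2}$, and the only tools available are the base $B$, the join-compatibility hypothesis on $\phi_*$ over subfamilies of $B$, and the fact that $\tau_{_1}$, being a subframe of $F_1$, is closed under arbitrary joins.

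First I would fix an arbitrary $t\in\tau_{_2}$. Since $B$ is a base for $\tau_{_2}$, there is a subfamily $B'=\{b_\alpha\}_{\alpha\in A}\subseteq B$ with $t=\bigvee_{\alpha\in A}b_\alpha$. Next I would apply the standing hypothesis $\phi_*\big(\bigvee_{\alpha\in A}b_\alpha\big)=\bigvee_{\alpha\in A}\phi_*(b_\alpha)$ to this particular subfamily, obtaining $\phi_*(t)=\bigvee_{\alpha\in A}\phi_*(b_\alpha)$. By assumption each $\phi_*(b_\alpha)$ lies in $\tau_{_1}$, and since $\tau_{_1}$ is a subframe of $F_1$ it is closed under arbitrary joins, so $\phi_*(t)=\bigvee_{\alpha\in A}\phi_*(b_\alpha)\in\tau_{_1}$. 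As $t\in\tau_{_2}$ was arbitrary, $\phi$ is $OLG$.

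There is essentially no obstacle here; the content is entirely in recognizing that the hypothesis on $\phi_*$ is exactly what lets one pass from the behaviour of $\phi_*$ on basic open elements to its behaviour on all open elements. (Note that $\phi$ is arbitrary join preserve, so $\phi_*$ exists and is the honest right adjoint by Proposition~\ref{right adjoint}, but $\phi_*$ need not preserve arbitrary joins in general — which is precisely why the extra compatibility hypothesis over subfamilies of $B$ must be imposed, as the preceding example shows.)
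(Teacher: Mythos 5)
Your argument is correct and is exactly the straightforward verification the paper omits: write $t\in\tau_{_2}$ as a join of basic elements, pass $\phi_*$ through that join using the stated hypothesis, and invoke closure of $\tau_{_1}$ under arbitrary joins. Nothing further is needed.
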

	\begin{proof}
		It is straightforward.
	\end{proof}
	
	\section{Product and Quotient}
	
	In this section, we study some related items to product $ LGT $-spaces. Then by inspired of the well-known concepts in the topology literature, the new concepts, $ LGT $-space generated by some family of maps, quotient $ LGT $-space and decomposition topology have been introduced and studied.
	
	\begin{Thm}
		Let  $ \{ (‌F_\alpha,\tau_\alpha) \}_{\alpha \in A} $ be a family of $ LGT $-spaces and  $ (‌ \prod_{\alpha \in A} F_\alpha, \tau_{_P} ) $ be the product $ LGT $-space. Then for each $ \beta \in A $, the projection map $ \pi_{_\beta} :  \prod_{\alpha \in A} F_\alpha \rightarrow F_\beta $ is an $ OLG $ map.
		\label{restriction of projection}
	\end{Thm}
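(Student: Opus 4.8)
The plan is to produce the right adjoint $(\pi_\beta)_*$ in closed form and then simply read off from that description that it carries open elements of $F_\beta$ to open elements of the product.

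First I would record that $\pi_\beta$ is an arbitrary join preserve map: the order and the joins of $\prod_{\alpha \in A} F_\alpha$ are computed coordinatewise, so $\pi_\beta\big(\bigvee E\big) = \bigvee_{t \in E} t_\beta = \bigvee \pi_\beta(E)$ for every $E \subseteq \prod_{\alpha \in A} F_\alpha$; hence $\pi_\beta$ has a unique right adjoint by the discussion preceding Proposition~\ref{right adjoint}. Next I would guess and verify this adjoint: for $b \in F_\beta$ let $\widehat{b}$ denote the tuple whose $\beta$-th coordinate is $b$ and whose every other coordinate is $1$. Since for any $x \in \prod_{\alpha \in A} F_\alpha$ the inequality $x \leqslant \widehat{b}$ holds \ff $x_\beta \leqslant b$ (the remaining conditions $x_\alpha \leqslant 1$ being automatic), that is, \ff $\pi_\beta(x) \leqslant b$, the assignment $b \mapsto \widehat{b}$ satisfies the defining adjunction; by uniqueness $(\pi_\beta)_*(b) = \widehat{b}$ for every $b \in F_\beta$. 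Equivalently, $\widehat{b}$ is visibly the largest element of the product whose $\beta$-th coordinate lies below $b$, which is exactly Proposition~\ref{right adjoint}(b).

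Finally I would check the $OLG$ condition directly on $\widehat{b}$ for $b \in \tau_\beta$: its $\beta$-th coordinate is $b \in \tau_\beta$, each of its other coordinates equals $1$, which lies in every $\tau_\alpha$ since $1$ belongs to every subframe, and it has at most one coordinate different from $1$. By the very definition of $\tau_{_P}$ this gives $\widehat{b} \in \tau_{_P}$, i.e. $(\pi_\beta)_*(b) \in \tau_{_P}$, so $\pi_\beta$ is an $OLG$ map.

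I do not expect a genuine obstacle here; the proof is short. The one spot that deserves a moment's care is pinning down $(\pi_\beta)_*$ explicitly, and the one place the hypotheses really bite is the clause "$t_\alpha \neq 1$ for only finitely many $\alpha$" in the definition of the product $LG$-topology, which is precisely what guarantees that $\widehat{b}$ lands in $\tau_{_P}$.
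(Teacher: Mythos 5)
Your proposal is correct and follows essentially the same route as the paper: the paper likewise picks the element $v$ with $v_\beta = t_\beta$ and $v_\alpha = 1$ otherwise, verifies via the adjunction that $(\pi_\beta)_*(t_\beta) = v$, and observes $v \in \tau_{_P}$. Your additional remarks (why $\pi_\beta$ preserves joins, why $1$ lies in every $\tau_\alpha$, and why the finiteness clause in the definition of $\tau_{_P}$ is what makes $\widehat{b}$ open) are accurate and merely make explicit what the paper leaves implicit.
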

	\begin{proof}
		Clearly $ \pi_{_\beta} $ is an arbitrary join preserve map. Suppose that $ t_\beta \in \tau_{_\beta}  $, pick $ v \in \prod_{\alpha \in A} F_\alpha $, in which 
		\[ v_\alpha = \begin{cases}
		1 & \alpha \neq \beta \\
		t_\beta & \alpha = \beta 
		\end{cases}. \]
		For each $ x \in \prod_{\alpha \in A} F_\alpha $,
		\[ x \leqslant ( \pi_{_\beta} )_*(t_\beta) \quad \Leftrightarrow \quad \pi_{_\beta} (x) \leqslant t_\beta \quad \Leftrightarrow \quad x_\beta \leqslant t_\beta \quad \Leftrightarrow \quad x \leqslant v \]
		Hence $ (\pi_{_\beta})_*(t_\beta) = v \in \tau_{_P} $.
	\end{proof}
	
	\begin{Cor}
		Let $ \{ (F_\alpha,\tau_\alpha) \}_{\alpha \in A} $ be a family of $ LGT $-spaces, $ ( \prod_{\alpha \in A} F_\alpha , \tau_{_P} ) $ be the product $ LGT $-space and $ (F,\tau) $ is an $ LGT $-space. An arbitrary join preserve map $ \phi : F \rightarrow \prod_{\alpha \in A} F_\alpha  $ is $ OLG $; \ff $ \pi_{_\alpha} \circ \phi  $ is $ OLG $, for every $ \alpha \in A $. 
		\label{contiuous on product}
	\end{Cor}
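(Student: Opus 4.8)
The plan is to prove the two implications separately. The implication ``$\phi$ is $OLG$ $\Rightarrow$ $\pi_\alpha \circ \phi$ is $OLG$'' is immediate: by Theorem~\ref{restriction of projection} every projection $\pi_\alpha$ is an $OLG$ map, and by Theorem~\ref{composition of continuous}(a) a composite of two $OLG$ maps is $OLG$, so $\pi_\alpha \circ \phi$ is $OLG$ for each $\alpha \in A$.

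For the converse, assume each $\pi_\alpha \circ \phi$ is $OLG$; I must show $\phi_*(u) \in \tau$ for every $u \in \tau_{_P}$. The key is a sub-basic decomposition of the open elements of the product. For $\beta \in A$ and $t \in \tau_\beta$ let $v^{(\beta,t)} \in \prod_{\alpha\in A} F_\alpha$ be the element whose $\beta$-th coordinate is $t$ and whose other coordinates are all $1$; then $v^{(\beta,t)} \in \tau_{_P}$, and the computation inside the proof of Theorem~\ref{restriction of projection} shows precisely that $v^{(\beta,t)} = (\pi_\beta)_*(t)$. A coordinatewise check shows that if $u \in \tau_{_P}$ has $u_\alpha \neq 1$ exactly for $\alpha$ in a finite set $\{\beta_1,\dots,\beta_n\}$, then $u = \bigwedge_{i=1}^n v^{(\beta_i,\,u_{\beta_i})}$ (the case $u = 1$ being the empty meet, with $\phi_*(1) = 1 \in \tau$). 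Since $\phi_*$ is a right adjoint it preserves arbitrary meets, so $\phi_*(u) = \bigwedge_{i=1}^n \phi_*\big(v^{(\beta_i,\,u_{\beta_i})}\big)$. By Proposition~\ref{right adjoint}(d) the right adjoint of a composite is the composite of the right adjoints in the opposite order, hence $\phi_*\big(v^{(\beta_i,\,u_{\beta_i})}\big) = \phi_*\big((\pi_{\beta_i})_*(u_{\beta_i})\big) = (\pi_{\beta_i}\circ\phi)_*(u_{\beta_i})$, and this lies in $\tau$ because $\pi_{\beta_i}\circ\phi$ is $OLG$ and $u_{\beta_i} \in \tau_{\beta_i}$. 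As $\tau$ is a subframe of $F$ it is closed under finite meets, so $\phi_*(u) \in \tau$; therefore $\phi$ is $OLG$.

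The only substantive part is the converse, and within it the single point that needs care is the reduction to the sub-basic elements $v^{(\beta,t)} = (\pi_\beta)_*(t)$: one must verify both that these belong to $\tau_{_P}$ and that every open element of $\tau_{_P}$ is a \emph{finite} meet of them. The finitary restriction built into the definition of $\tau_{_P}$ is exactly what guarantees this meet is finite, so that closure of $\tau$ under finite (rather than arbitrary) meets is enough; once this is in place, everything else is a mechanical application of ``$\phi_*$ preserves meets'' together with the adjoint-of-a-composite identity, and I anticipate no real obstacle.
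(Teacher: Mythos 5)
Your proof is correct and follows essentially the same route as the paper: the forward direction via Theorems~\ref{restriction of projection} and~\ref{composition of continuous}, and the converse by writing each $u\in\tau_{_P}$ as a finite meet of the sub-basic elements $(\pi_{\beta_i})_*(u_{\beta_i})$ and applying meet-preservation of $\phi_*$ together with $(\pi_{\beta_i}\circ\phi)_*=\phi_*\circ(\pi_{\beta_i})_*$. You merely make explicit the decomposition step that the paper labels ``easy to see,'' including the empty-meet case $u=1$.
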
  
	\begin{proof}
		($ \Rightarrow $). It follows immediately from Theorems \ref{composition of continuous} and \ref{restriction of projection}.
		
		($ \Leftarrow $). Suppose that $ t \in \tau_{_P} $. It easy to see that, for some $ n \in \mathbb{N} $, we have $ t = \bigwedge_{i=1}^n ( \pi_{_{\alpha_i}} )_* (t_{\alpha_i}) $, in which $ \alpha_i \in A $ and $ t_{\alpha_i} \in \tau_{_{\alpha_i}} $, for every $ 1 \leqslant i \leqslant n $. Since $ \pi_{\alpha_i} \circ \phi $ is $ OLG $, $ ( \pi_{\alpha_i} \circ \phi )_* (t_{\alpha_i}) \in \tau' $, for $ 1 \leqslant i \leqslant n $. Then, by Proposition \ref{right adjoint},	
		\[ \phi_*(t) =  \phi_* \Big( \bigwedge_{i=1}^n (\pi_{\alpha_i})_* (t_{\alpha_i}) \Big) = \bigwedge_{i=1}^n \phi_* (‌\pi_{\alpha_i*}(t_{\alpha_i})) = \bigwedge_{i=1}^n (‌\pi_{\alpha_i} \circ \phi)_* (t_{\alpha_i}) \in \tau'  \]
		Hence $ \phi $ is $ OLG $.
	\end{proof}
	
	\begin{Def}
		Suppose that $ \{ (F_\alpha,\tau_\alpha) \}_{\alpha \in A} $ is a family of $ LGT $-spaces, $ F $ is a frame and $ \phi_\alpha :‌F \rightarrow F_\alpha $ is an arbitrary join preserve map, for each $ \alpha \in A $. The $LG$-topology generated by the family $ \{‌ \phi_{\alpha*}(t_\alpha) : \alpha \in A \text{ and } t_\alpha \in \tau_\alpha \} $ is called the \emph{weak $LG$-topology generated} by $ \{ \phi_\alpha \}_{\alpha \in A} $ and $ F $ with this topology is called \emph{weak $ LGT $-space generated} by $ \{ \phi_\alpha \}_{\alpha \in A} $.
	\end{Def}
	
	Actually, the product $LG$-topology is not a good extension of the product topology. But the weak $LG$-topology generated by the family $ \{ \mho(\pi_{_\alpha}) \}_{\alpha \in A} $ on $ \mathcal{P}(\prod_{\alpha \in A} X_\alpha) $ coincides with $ \mho( \prod_{\alpha \in A} X_\alpha , \tau) $, in which $ \tau $ is the product topology.
	
	\begin{Thm}
		Suppose that $ \{ (F_\alpha,\tau_\alpha) \}_{\alpha \in A} $ is a family of $ LGT $-spaces, $ (F,\tau) $ is the weak $ LGT $-space generated by $ \{ \phi_\alpha  :‌ F \rightarrow F_\alpha  \}_{\alpha \in A} $, $ (F',\tau') $ is an $ LGT $-space and $ \psi : F' \rightarrow F $. If $ \psi $ and $ \psi_* $ are an arbitrary join preserve maps; then $ \psi $ is $ OLG $, \ff $ \phi_\alpha \circ \psi $ is $ OLG $, for each $ \alpha \in A $.
	\end{Thm}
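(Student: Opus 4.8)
The plan is to reduce the whole statement to the adjunction identities of Proposition~\ref{right adjoint} together with the composition result, Theorem~\ref{composition of continuous}(a). The starting remark is that each $\phi_\alpha : (F,\tau) \to (F_\alpha,\tau_\alpha)$ is itself $OLG$: by the very definition of the weak $LG$-topology, $\phi_{\alpha*}(t_\alpha)$ is a member of the generating family of $\tau$, hence $\phi_{\alpha*}(t_\alpha)\in\tau$ for every $t_\alpha\in\tau_\alpha$, which is exactly what it means for $\phi_\alpha$ to be $OLG$.

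For the forward implication, assume $\psi$ is $OLG$. Then $\psi$ is in particular arbitrary join preserve, so for each $\alpha\in A$ the composite $\phi_\alpha\circ\psi$ is arbitrary join preserve and, by Theorem~\ref{composition of continuous}(a) applied to the $OLG$ maps $\psi$ and $\phi_\alpha$, it is $OLG$.

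For the converse, assume $\phi_\alpha\circ\psi$ is $OLG$ for each $\alpha$. Write $G=\{\phi_{\alpha*}(t_\alpha):\alpha\in A,\ t_\alpha\in\tau_\alpha\}$; then $\tau$ is the smallest subframe of $F$ containing $G$, and since finite meets distribute over arbitrary joins this subframe consists precisely of the arbitrary joins of finite meets of members of $G$. Thus an arbitrary $t\in\tau$ has the form $t=\bigvee_{j\in J}\bigwedge_{i=1}^{n_j}\phi_{\alpha_{ij}*}(t_{ij})$ with $\alpha_{ij}\in A$ and $t_{ij}\in\tau_{\alpha_{ij}}$. Applying $\psi_*$, which is arbitrary join preserve by hypothesis and, being a right adjoint, is automatically arbitrary meet preserve, and then using Proposition~\ref{right adjoint}(d),
\[ \psi_*(t)=\bigvee_{j\in J}\bigwedge_{i=1}^{n_j}\psi_*\big(\phi_{\alpha_{ij}*}(t_{ij})\big)=\bigvee_{j\in J}\bigwedge_{i=1}^{n_j}(\phi_{\alpha_{ij}}\circ\psi)_*(t_{ij}). \]
Each term $(\phi_{\alpha_{ij}}\circ\psi)_*(t_{ij})$ lies in $\tau'$ because $\phi_{\alpha_{ij}}\circ\psi$ is $OLG$; since $\tau'$ is a subframe it is closed under the finite meets and the arbitrary join displayed above, so $\psi_*(t)\in\tau'$, and $\psi$ is $OLG$.

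The only step that uses the full force of the hypotheses is the passage of $\psi_*$ through the outer join $\bigvee_{j\in J}$: a right adjoint always preserves arbitrary meets, which takes care of the inner finite meets for free, but it need not preserve joins, so the assumption that $\psi_*$ is arbitrary join preserve is precisely what makes this computation legitimate; everything else is routine bookkeeping once the normal form for elements of the weak $LG$-topology is available. I therefore expect this normal-form description — and the verification that it is the correct one — to be the only genuinely delicate point.
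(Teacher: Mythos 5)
Your argument is correct and is essentially the paper's own proof: the paper simply says the claim follows from Proposition~\ref{base and continuous} applied as in Corollary~\ref{contiuous on product}, taking the finite meets of the generators $\phi_{\alpha*}(t_\alpha)$ as a base, and your normal-form decomposition of an arbitrary $t\in\tau$ together with the facts that $\psi_*$ preserves arbitrary joins (by hypothesis) and arbitrary meets (as a right adjoint) is exactly the content of that citation. The one point worth keeping explicit is your opening observation that each $\phi_\alpha$ is itself $OLG$ by the definition of the weak $LG$-topology, which is what makes the forward direction a direct application of Theorem~\ref{composition of continuous}(a).
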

	\begin{proof}
		By Proposition \ref{base and continuous}, it similar to the proof of Corollary \ref{contiuous on product}.
	\end{proof}
	
	\begin{Pro}
		Suppose that $ (F',\tau') $ is an $ LGT $-space and $ F  $ is a frame. If $ \phi : F' \rightarrow F $ is onto and $ \phi $ and $ \phi_* $ are arbitrary join preserve maps, then $ \tau_\phi = \{ t \in F : \phi_*(t) \in \tau_{_1} \} $ is the greatest  $LG$-topology on $ F $, where $ \phi $ is $ OLG $. 
		\label{quotient}
	\end{Pro}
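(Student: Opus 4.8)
The plan is to establish three things in sequence: that $\tau_\phi$ is an $LG$-topology on $F$ (that is, a subframe of $F$), that $\phi$ is an $OLG$ map once $F$ carries $\tau_\phi$, and that $\tau_\phi$ contains every $LG$-topology on $F$ with respect to which $\phi$ is $OLG$. The whole argument rests on two facts already at hand: the adjunction identities of Proposition~\ref{right adjoint}, and the observation that $\phi_*$ is \emph{both} arbitrary meet preserve (automatic, being a right adjoint) and arbitrary join preserve (the standing hypothesis). Here I read the $\tau_{_1}$ of the statement as $\tau'$.

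First I would check closure. Since $\phi_*$ preserves arbitrary meets it fixes the top, so $\phi_*(1)=1\in\tau'$ and hence $1\in\tau_\phi$; since $\phi_*$ preserves arbitrary joins it sends the empty join to the empty join, so $\phi_*(0)=0\in\tau'$ and hence $0\in\tau_\phi$. For a family $\{t_i\}_{i\in I}\subseteq\tau_\phi$, join-preservation of $\phi_*$ gives $\phi_*\bigl(\bigvee_{i}t_i\bigr)=\bigvee_i\phi_*(t_i)$, a join of elements of $\tau'$, which lies in $\tau'$ because $\tau'$ is a subframe; thus $\bigvee_i t_i\in\tau_\phi$. For $s,t\in\tau_\phi$, meet-preservation of $\phi_*$ gives $\phi_*(s\wedge t)=\phi_*(s)\wedge\phi_*(t)\in\tau'$, so $s\wedge t\in\tau_\phi$. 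Hence $\tau_\phi$ is a subframe of $F$, i.e.\ an $LG$-topology.

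Then $\phi$ is $OLG$ for $(F,\tau_\phi)$ essentially by definition: $\phi$ is arbitrary join preserve by hypothesis, and $\phi_*(b)\in\tau'$ for each $b\in\tau_\phi$ is precisely how $\tau_\phi$ was defined. For maximality, let $\sigma$ be any $LG$-topology on $F$ making $\phi\colon(F',\tau')\to(F,\sigma)$ an $OLG$ map; then every $t\in\sigma$ satisfies $\phi_*(t)\in\tau'$, i.e.\ $t\in\tau_\phi$, so $\sigma\subseteq\tau_\phi$. Combining the three steps gives that $\tau_\phi$ is the greatest $LG$-topology on $F$ for which $\phi$ is $OLG$.

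I do not anticipate a genuine obstacle; the proposition follows directly from the adjunction formalism. The one spot calling for attention is the membership $0,1\in\tau_\phi$, which is exactly where both preservation properties of $\phi_*$ are used — meet-preservation for $\phi_*(1)=1$ and the extra join-preservation hypothesis for $\phi_*(0)=0$. It is also worth noting that surjectivity of $\phi$ is not actually needed for this statement; it is presumably retained in the hypotheses because the same map will be reused immediately afterwards to build the quotient $LGT$-space.
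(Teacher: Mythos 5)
Your proof is correct and follows essentially the same route as the paper's: verify $0,1\in\tau_\phi$ and closure under arbitrary joins and finite meets, using that $\phi_*$ preserves arbitrary joins (by hypothesis) and arbitrary meets (as a right adjoint), and then observe that maximality is immediate from the definition of an $OLG$ map. Your remark that surjectivity is dispensable is also sound --- the paper invokes it only to get $\phi(1)=1$ en route to $\phi_*(1)=1$, which you obtain more directly from meet-preservation of the right adjoint.
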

	\begin{proof}
		Since $ \phi_* $ is an arbitrary join preserve map, $ \phi_*(0) = 0 $ and therefore $ 0 \in \tau_\phi $. Now, suppose that $ \phi(a) = 1 $, for some $ a \in F $, then 
		\[ \phi(1) = \phi( a \vee 1 )‌ = \phi (a) \vee \phi(1) = 1 \]
		Hence $ \phi_*(1) = \bigvee_{\phi(x) \leqslant 1} x = 1 $, so $ 1 \in \tau_\phi $. Since $ \phi_*\big( \bigvee_{\alpha \in A} t_\alpha \big) = \bigvee_{\alpha \in A} \phi_*(t_\alpha) $ and $ \phi_*(t_1 \wedge t_2) = \phi_*(t_1)‌ \wedge \phi_*(t_2) $, for every subfamily $ \{ t_\alpha \}_{\alpha \in A} $ of $ \tau' $ and $ t_1 $ and $ t_2 $ in $ \tau' $, $  \bigvee_{\alpha \in A} t_\alpha \in \tau_\phi $ and $ t_1 \wedge t_2 \in \tau_\phi $. Consequently, $ \tau_\phi $ is an $LG$-topology on $ F $. Clearly, $ \tau_\phi $ is the greatest  $LG$-topology on $ F $, where $ \phi $ is $ OLG $.
	\end{proof}
	
	\begin{Def}
		Suppose that $ (F',\tau') $ is an $ LGT $-space and $ F  $ is a frame. If $ \phi : F' \rightarrow F $ is onto and $ \phi $ and $ \phi_* $ are arbitrary join preserve maps. Then by the above proposition, the $LG$-topology $ \tau_\phi = \{ t \in F : \phi_*(t) \in \tau'  \} $ is called the \emph{quotient $LG$-topology} on $ F $ induced by $ \phi $.
	\end{Def}
	
	\begin{Thm}
		Suppose that $ (F',\tau') $ and $ (F'',\tau'') $ are two $ LGT $-spaces and $ (F,\tau_\phi) $ is the quotient $LG$-topology induced by $ \phi : F' \rightarrow F $. An arbitrary join preserve map $ \psi : F \rightarrow F''  $ is $ OLG $, \ff $ \psi \circ \phi : F' \rightarrow F'' $ is $ OLG $.
	\end{Thm}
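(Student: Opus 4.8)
The plan is to read off both implications directly from the defining property of the quotient $LG$-topology — that $t\in\tau_\phi$ exactly when $\phi_*(t)\in\tau'$ — combined with the composition rule for right adjoints in Proposition \ref{right adjoint}(d): since $\phi:F'\to F$ and $\psi:F\to F''$ are arbitrary join preserve, so is $\psi\circ\phi$, and $(\psi\circ\phi)_*=\phi_*\circ\psi_*$. No genuinely new idea is needed; this is the pointfree version of the universal property of a quotient.

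For the forward implication, the first step is to note that $\phi:(F',\tau')\to(F,\tau_\phi)$ is itself $OLG$: by Proposition \ref{quotient}, $\tau_\phi$ is the \emph{greatest} $LG$-topology on $F$ making $\phi$ an $OLG$ map, so in particular $\phi$ is $OLG$. Hence, if $\psi$ is $OLG$, then $\psi\circ\phi$ is a composite of $OLG$ maps, and Theorem \ref{composition of continuous}(a) yields that $\psi\circ\phi$ is $OLG$. For the converse, assume $\psi\circ\phi$ is $OLG$ and take an arbitrary $t\in\tau''$; then $(\psi\circ\phi)_*(t)\in\tau'$. By the adjoint composition rule, $(\psi\circ\phi)_*(t)=\phi_*\big(\psi_*(t)\big)$, so $\phi_*(\psi_*(t))\in\tau'$, which by the definition of the quotient $LG$-topology says precisely that $\psi_*(t)\in\tau_\phi$. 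As $t$ was an arbitrary open element of $F''$, $\psi$ is $OLG$.

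I do not expect any real obstacle here: the only points needing (minor) care are that $\psi\circ\phi$ is arbitrary join preserve — so that calling it $OLG$ is meaningful — which is immediate from Proposition \ref{right adjoint}(d), and keeping the composition order straight in $(\psi\circ\phi)_*=\phi_*\circ\psi_*$ (Proposition \ref{right adjoint}(d) is stated with the factors in the opposite written order, but the content is the same). Everything else is routine bookkeeping with the definition of $\tau_\phi$.
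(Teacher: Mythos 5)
Your proposal is correct and follows essentially the same route as the paper: the forward direction via closure of $OLG$ maps under composition (Theorem \ref{composition of continuous}), and the converse by unwinding $(\psi\circ\phi)_*=\phi_*\circ\psi_*$ against the definition of $\tau_\phi$. You even make explicit a step the paper leaves implicit, namely that $\phi:(F',\tau')\to(F,\tau_\phi)$ is itself $OLG$ by Proposition \ref{quotient}.
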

	\begin{proof}
		$ \Rightarrow $). It is trivial, by Theorem \ref{composition of continuous}.
		
		$ \Leftarrow $). If $ t \in \tau'' $, then  $ ( \psi \circ \phi )_* (t) \in \tau' $, so $ \phi_* ( \psi_*(t) ) \in \tau'  $, by Proposition \ref{right adjoint}.  Thus $ \psi_*(t) \in \tau_\phi $, and therefore $ \psi $ is $ OLG $. 
	\end{proof}
	
	\begin{Def}
		Suppose that $ F $ is a frame. $ D \subseteq F $ is called a partition for $ F $, if 
		\begin{itemize}
			\item[(a)] $ 0 \notin D $.
			\item[(b)] $ \bigvee D = 1 $.
			\item[(c)] For every $ d_1,d_2 \in D $, $ d_1 \neq d_2 $ implies that $ d_1 \wedge d_2 = 0 $.
		\end{itemize}
	\end{Def}
	
	\begin{Lem}
		Suppose that $ F $ is a frame, $ a \in F $, $ D \subseteq F $ is a partition for $ F $, $ T \subseteq D $ and $ T_a = \{ d \in D : d \wedge a \neq 0 \} $. Then $ a \leqslant \bigvee T  $, \ff $ T_a \subseteq T $.
		\label{elements in partition}
	\end{Lem}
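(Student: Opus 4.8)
The plan is to prove both implications directly from the infinite distributive law in the frame $F$ together with the two defining properties of a partition ($\bigvee D = 1$ and pairwise meets vanishing).

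First I would handle the forward implication. Assume $a \leqslant \bigvee T$ and take an arbitrary $d \in T_a$, so $d \in D$ and $d \wedge a \neq 0$. Meeting both sides of $a \leqslant \bigvee T$ with $d$ and using frame distributivity gives $0 \neq d \wedge a \leqslant d \wedge \bigvee T = \bigvee_{t \in T}(d \wedge t)$, so $d \wedge t \neq 0$ for at least one $t \in T$. Since $d, t \in D$ and $D$ is a partition, $d \wedge t \neq 0$ forces $d = t$, whence $d \in T$. As $d \in T_a$ was arbitrary, $T_a \subseteq T$.

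For the converse, assume $T_a \subseteq T$. Using $\bigvee D = 1$ and distributivity, write
\[
a = a \wedge 1 = a \wedge \bigvee D = \bigvee_{d \in D} (a \wedge d).
\]
For each $d \in D$ there are two cases: if $a \wedge d = 0$ then trivially $a \wedge d \leqslant \bigvee T$; if $a \wedge d \neq 0$ then $d \in T_a \subseteq T$, so $a \wedge d \leqslant d \leqslant \bigvee T$. In either case $a \wedge d \leqslant \bigvee T$, and taking the join over $d \in D$ yields $a \leqslant \bigvee T$.

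There is no real obstacle here: the only thing to be slightly careful about is that the argument must invoke the infinite (not merely finite) distributive law, since $D$ and $T$ may be infinite, and that the degenerate case $a = 0$ (where $T_a = \varnothing$) is automatically consistent with both sides. I would present the two paragraphs above essentially verbatim as the proof.
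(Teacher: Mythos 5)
Your proof is correct and follows essentially the same route as the paper: both directions rest on the infinite distributive law together with pairwise disjointness and $\bigvee D = 1$, the only cosmetic difference being that the paper phrases the forward implication as a proof by contradiction while you argue it directly. No changes needed.
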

	\begin{proof} If $ a = 0 $, then it is clear. Now suppose that $ a \neq 0 $.
		
		($\Rightarrow$). On contrary, suppose that $ d' \in T_a \setminus T $ exists, then 
		\[a = a \wedge \big( \bigvee_{d \in T} d \big) = \bigvee_{d \in T} ( d \wedge a )  \; \Rightarrow \; 0 \neq d' \wedge a = d' \wedge \Big[ \bigvee_{d \in T} ( d \wedge a ) \Big] = \bigvee_{d \in T} ( d' \wedge d \wedge a) = 0 \]
		which is a contradiction.
		
		($\Leftarrow$). Since for each $ d \in D \setminus T $, $ d \wedge a = 0 $,
		\begin{align*}
		a  = a \wedge 1 & = a \wedge \Big( \bigvee_{d \in D} d \Big) = \bigvee_{d \in D} ( a \wedge d ) \\
		& = \bigg[ \bigvee_{d \in T} ( a \wedge d ) \vee \bigvee_{d \in D \setminus T} ( a \wedge d ) \bigg] \\ 
		& = \bigvee_{d \in T} ( a \wedge d ) = a \wedge \Big( \bigvee_{d \in T} d \Big).  
		\end{align*} 
		Consequently, $ a \leqslant \bigvee_{d \in T} d $.
	\end{proof}
	
	\begin{Pro}
		Suppose that $ (F,\tau) $ is an $ LGT $-space. If $ D $ is a partition for $ F $, then $ \tau_{_D} = \{ T \subseteq D : \bigvee T \in \tau \} $ is a topology on $ D $.
	\end{Pro}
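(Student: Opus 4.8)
The plan is to verify the three axioms for a topology on the set $D$: that $\emptyset$ and $D$ lie in $\tau_{_D}$, that $\tau_{_D}$ is closed under arbitrary unions, and that it is closed under finite intersections. The first two are immediate from the fact that $\tau$ is a subframe of $F$, while the third is the one genuine point and rests on frame distributivity together with the pairwise disjointness built into the partition $D$.

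First I would note that $\bigvee \emptyset = 0 \in \tau$ since $\tau$ is a subframe (it is closed under arbitrary joins, including the empty one), so $\emptyset \in \tau_{_D}$; and $\bigvee D = 1 \in \tau$ by the definition of a partition together with the fact that $\tau$ contains $1$, so $D \in \tau_{_D}$. Next, given a family $\{ T_i \}_{i \in I} \subseteq \tau_{_D}$, I would use $\bigvee \big( \bigcup_{i \in I} T_i \big) = \bigvee_{i \in I} \big( \bigvee T_i \big)$; since each $\bigvee T_i \in \tau$ and $\tau$ is closed under arbitrary joins, the right-hand side lies in $\tau$, whence $\bigcup_{i \in I} T_i \in \tau_{_D}$.

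For finite intersections, let $T_1, T_2 \in \tau_{_D}$. The key identity is
\[
\bigvee ( T_1 \cap T_2 ) = \Big( \bigvee T_1 \Big) \wedge \Big( \bigvee T_2 \Big).
\]
To prove it, I would expand the right-hand side by frame distributivity as $\bigvee_{d_1 \in T_1,\, d_2 \in T_2} ( d_1 \wedge d_2 )$, then observe that by condition (c) of the partition every term with $d_1 \neq d_2$ vanishes, while the terms with $d_1 = d_2$ contribute exactly $\bigvee_{d \in T_1 \cap T_2} d$. (This is essentially the disjointness computation already used in the proof of Lemma \ref{elements in partition}.) Since $\bigvee T_1, \bigvee T_2 \in \tau$ and $\tau$ is closed under finite meets, the left-hand side lies in $\tau$, so $T_1 \cap T_2 \in \tau_{_D}$, and the claim follows by induction on the number of sets intersected.

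The main obstacle — really the only non-routine step — is the distributive identity in the last paragraph; once one is careful to invoke frame distributivity in the correct direction (a meet of two joins expands over the index set of pairs) and to use the disjointness of distinct members of $D$ to kill the off-diagonal terms, everything else is bookkeeping with the subframe axioms.
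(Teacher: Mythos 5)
Your proof is correct and follows essentially the same route as the paper's: both reduce everything to the identity $\bigvee (T_1 \cap T_2) = \big(\bigvee T_1\big) \wedge \big(\bigvee T_2\big)$, with the empty set, the whole partition, and arbitrary unions handled by the subframe axioms. The only (harmless) difference is in how that identity is proved: the paper compares lower bounds of the two sides via Lemma~\ref{elements in partition}, whereas you expand the meet of the two joins by frame distributivity and use disjointness of distinct members of $D$ to annihilate the off-diagonal terms --- both arguments are valid and rest on the same disjointness computation.
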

	\begin{proof}
		Clearly $ \bigvee \emptyset = \emptyset $ and, by the assumption, $ \bigvee D = 1 $, so $ \emptyset , D \in \tau_{_D} $.
		
		Suppose that $ T_1,T_2 \in \tau_{_D} $. By Lemma \ref{elements in partition},
		\begin{align*}
		a \leqslant \bigvee_{d \in T_1 \cap T_2 } d \quad 
		& \Leftrightarrow \quad T_a \subseteq T_1 \cap T_2 \quad \Leftrightarrow \quad T_a \subseteq T_1 \quad \text{ and } \quad T_a \subseteq T_2 \\
		& \Leftrightarrow \quad a \leqslant \bigvee_{d \in T_1} d \quad \text{ and } \quad  a \leqslant \bigvee_{d \in T_2 } d \\
		& \Leftrightarrow \quad a \leqslant \Big( \bigvee_{d \in T_1} d \Big) \wedge \Big( \bigvee_{d \in T_2 } d \Big).
		\end{align*}
		Thus $ \bigvee_{d \in T_1 \cap T_2 } d = \Big( \bigvee_{d \in T_1} d \Big) \wedge \Big( \bigvee_{d \in T_2 } d \Big) \in \tau $, and therefore $ T_1 \cap T_2 \in \tau_{_D} $.
		
		Now suppose that $ \{ T_\alpha \}_{\alpha \in A} \subseteq \tau_{_D} $. Since $ \bigvee_{d \in \bigcup_{\alpha \in A} T_\alpha } d = \bigvee_{\alpha \in A} \bigvee_{d \in T_\alpha } d \in \tau $, $ \bigcup_{\alpha \in A} T_\alpha \in \tau_{_D} $. Consequently, $ (D,\tau_{_D}) $ is a topological space.
	\end{proof}
	
	\begin{Def}
		Suppose that $ (F,\tau) $ is an $ LGT $-space and $ D $ is a partition for $ F $. By the above proposition, $ \tau_{_D} = \{ T \subseteq D : \bigvee T \in \tau \} $ is a topology on $ D $, $ \tau_{_D} $ is called the decomposition topology and $ (\mathcal{P}(D),\tau_{_D}) $ is called decomposition $LGT$-space.
	\end{Def}

	\begin{Thm}
		Suppose that $ (F,\tau) $ is an $ LGT $-space and $ D $ is a partition for $ F $. The decomposition topology $ \tau_{_D} $ is a quotient $LG$-topology on $ \mathcal{P}(D) $. 
	\end{Thm}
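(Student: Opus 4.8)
The plan is to realize $(\mathcal{P}(D),\tau_{_D})$ as a quotient of the given space $(F,\tau)$ itself. Define $\phi : F \rightarrow \mathcal{P}(D)$ by $\phi(a) = T_a = \{ d \in D : d \wedge a \neq 0 \}$, exactly the set appearing in Lemma \ref{elements in partition}. First I would check that $\phi$ is an arbitrary join preserve map: for any $\{ a_\alpha \}_{\alpha \in A} \subseteq F$, frame distributivity gives $d \wedge \bigvee_{\alpha} a_\alpha = \bigvee_{\alpha}( d \wedge a_\alpha )$, so $d \in T_{\bigvee a_\alpha}$ iff $d \wedge a_\beta \neq 0$ for some $\beta$; that is, $\phi\big( \bigvee_{\alpha} a_\alpha \big) = \bigcup_{\alpha} \phi(a_\alpha) = \bigvee_{\alpha} \phi(a_\alpha)$. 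In particular $\phi(0) = \emptyset$.

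Next I would show $\phi$ is onto and compute its right adjoint simultaneously. Given $T \subseteq D$, the partition conditions ($0 \notin D$ and $d \wedge d' = 0$ for distinct $d,d' \in D$) yield $\phi\big( \bigvee T \big) = T$, so $\phi$ is surjective. By Proposition \ref{right adjoint}(a), $\phi_*(T) = \bigvee \{ a \in F : T_a \subseteq T \}$, and by Lemma \ref{elements in partition} the condition $T_a \subseteq T$ is equivalent to $a \leqslant \bigvee T$; hence $\phi_*(T) = \bigvee T$. From this it follows at once that $\phi_*$ is arbitrary join preserve, since $\phi_*\big( \bigcup_{\alpha} T_\alpha \big) = \bigvee \big( \bigcup_{\alpha} T_\alpha \big) = \bigvee_{\alpha} \bigvee T_\alpha = \bigvee_{\alpha} \phi_*(T_\alpha)$.

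Now $\phi : (F,\tau) \rightarrow \mathcal{P}(D)$ is onto and both $\phi$ and $\phi_*$ are arbitrary join preserve, so Proposition \ref{quotient} and the definition of quotient $LG$-topology apply: the quotient $LG$-topology on $\mathcal{P}(D)$ induced by $\phi$ is $\tau_\phi = \{ T \in \mathcal{P}(D) : \phi_*(T) \in \tau \} = \{ T \subseteq D : \bigvee T \in \tau \} = \tau_{_D}$, which is precisely the decomposition topology. This finishes the proof.

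The genuinely load-bearing step is the identification $\phi_*(T) = \bigvee T$, which is where Lemma \ref{elements in partition} is essential; everything else (the distributivity computation showing $\phi$ preserves joins, and the partition bookkeeping showing $\phi$ is onto) is routine. I do not expect a real obstacle here — the only thing to be careful about is keeping the two directions of Lemma \ref{elements in partition} straight so that both surjectivity of $\phi$ and the formula for $\phi_*$ come out correctly.
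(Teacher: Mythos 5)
Your proof is correct and follows essentially the same route as the paper: the same map $a \mapsto T_a$, the same identification $\phi_*(T) = \bigvee T$ via Lemma \ref{elements in partition}, and the same final computation $\tau_\phi = \tau_{_D}$. If anything, you are slightly more complete, since you explicitly verify that $\phi$ is onto (via $\phi(\bigvee T) = T$), a hypothesis of Proposition \ref{quotient} that the paper's proof leaves unchecked.
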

	\begin{proof}
		Set $ P : F \rightarrow \mathcal{P}(D) $, defined by $ P(a) = \{ d \in D : d \wedge a \neq 0 \} = T_a $. By Lemma \ref{elements in partition}, $ a \leqslant \bigvee T $, \ff $ T_a \subseteq T $; \ff $ P(a) \subseteq T $ and this is equivalent to say that $ a \leqslant P_*(T) $. Consequently, $ P_*(T) = \bigvee T \quad (*)$.
		
		Clearly, $ P(0) = \emptyset  $. Suppose that $ \{ T_\alpha \}_{\alpha \in A } \subseteq P(D) $, then by $(*)$,
		\[ P_*\Big(\bigcup_{\alpha \in A} T_\alpha \Big) = \bigvee_{\alpha \in A} \Big( \bigvee T_\alpha \Big) = \bigvee_{\alpha \in A} P_*(T_\alpha). \]
		Thus $ P_* $ is arbitrary join preserve. 
		
		Now suppose that $ \{ a_\alpha \}_{\alpha \in A}  $. Then for each $ T \subseteq D $,
		\begin{align*}
		P\big(\bigvee_{\alpha \in A} a_\alpha\big) \subseteq T \quad 
		& \Leftrightarrow \quad \bigvee_{\alpha \in A} a_\alpha \leqslant P_*(T) & \\
		& \Leftrightarrow \quad \bigvee_{ \alpha \in A } a_\alpha \leqslant \bigvee T & \text{ By } (*) \\
		& \Leftrightarrow \quad \forall \alpha \in A \quad a_\alpha \leqslant \bigvee T & \\
		& \Leftrightarrow \quad \forall \alpha \in A \quad T_{a_\alpha} \subseteq T & \text{By Lemma \ref{elements in partition}} \\
		& \Leftrightarrow \quad \bigcup_{\alpha \in A} T_{a_\alpha} \subseteq T & 
		\end{align*}
		Thus $ P\big(\bigvee_{\alpha \in A} a_\alpha\big) = \bigcup_{\alpha \in A} T_{a_\alpha} = \bigcup_{\alpha \in A} P(a_\alpha) $, hence $ P $ is arbitrary join preserve.
		
		Finally, by $(*)$,
		\[ T \in \tau_{_D} \quad \Leftrightarrow \quad P_*(T) = \bigvee T \in \tau \quad \Leftrightarrow \quad T \in \tau_{_P} \]
		Hence $ \tau_{_D} = \tau_{_P} $.
	\end{proof}
	
	Clearly, a non-topological $ LGT $-space $ (F,\tau) $ exists. Then $ \tau_{_{I_F}} = \tau $, in which $ I_F $ is the identity map, hence $ \tau_{_{I_F}} $ is a quotient $LG$-topology which is not decomposition topology. Therefore the converse of the above theorem is not true in generally.
	
	\section{Isomorphism}
	
	In the last section, first we introduce and study open and closed map and then, by the use of the $ LG $ map, we introduce an isomorphism, called $ LG $ map, between $ LGT $-spaces. Finally some $ LG $-properties have been studied.
	
	\begin{Def}
		Suppose that $ (F_1, \tau_{_1}) $ and $ (F_2,\tau_{_2} ) $ are two $ LGT $-spaces. A map $ \phi : F_1 \rightarrow F_2 $ is called open (closed) map if $ \phi(t) \in \tau_{_2} $ ($ \phi(t^*) \in \tau_{_2}^* $), for each $ t \in \tau_{_1} $.  
	\end{Def}
	
	\begin{Pro}
		Suppose that $ (F_1, \tau_{_1}) $ and $ (F_2,\tau_{_2} ) $ are two $ LGT $-spaces and $ \phi : F_1 \rightarrow F_2 $. If $ \phi $ and $ \phi_* $ are arbitrary join preserve maps, then
		\begin{itemize}
			\item[(a)] $ \phi $ is an open map, \ff $ \phi_* $ is an $ OLG $ map.
			\item[(b)]  $ \phi $ is a closed map, \ff $ \phi_* $ is a $ CLG $ map.
		\end{itemize}
		\label{open and and olg right adjoin}
	\end{Pro}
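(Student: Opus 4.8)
The plan is to reduce both equivalences to the single identity $(\phi_*)_* = \phi$. Since $\phi$ and $\phi_*$ are \emph{both} assumed to be arbitrary join preserve maps, Lemma~\ref{preserving and right adjoin} applies directly and yields $\phi_{**} = \phi$, that is, $(\phi_*)_*(b) = \phi(b)$ for every $b$. This is the only substantive ingredient; everything else is unwinding the definitions of open/closed map and of $OLG$/$CLG$ map. I would also remark at the outset that the hypothesis ``$\phi_*$ is arbitrary join preserve'' is exactly what is needed for the phrase ``$\phi_*$ is an $OLG$ (resp.\ $CLG$) map'' to be meaningful, since an $OLG$/$CLG$ map is by definition required to be arbitrary join preserve.

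For part (a): by the definition of $OLG$ map applied to $\phi_* \colon (F_2,\tau_{_2}) \to (F_1,\tau_{_1})$, the map $\phi_*$ is $OLG$ precisely when it is arbitrary join preserve (true by hypothesis) and $(\phi_*)_*(b) \in \tau_{_2}$ for every $b \in \tau_{_1}$. Substituting $(\phi_*)_*(b) = \phi(b)$ via Lemma~\ref{preserving and right adjoin}, this last condition becomes: $\phi(b) \in \tau_{_2}$ for every $b \in \tau_{_1}$, which is verbatim the statement that $\phi$ is an open map. Hence the two conditions coincide.

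For part (b): similarly, $\phi_*$ is a $CLG$ map iff it is arbitrary join preserve and $(\phi_*)_*(b) \in \tau_{_2}^*$ for every $b \in \tau_{_1}^*$; using $(\phi_*)_* = \phi$ this reads $\phi(b) \in \tau_{_2}^*$ for every $b \in \tau_{_1}^*$. Since $\tau_{_1}^* = \{\, t^* : t \in \tau_{_1} \,\}$, this is the same as requiring $\phi(t^*) \in \tau_{_2}^*$ for every $t \in \tau_{_1}$, which is exactly the definition of $\phi$ being a closed map.

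The closest thing to an obstacle is purely bookkeeping: one must be careful that the symbol $\phi_*$ is used in two roles here --- as the right adjoint of $\phi$ and as a map between $LGT$-spaces whose own right adjoint $(\phi_*)_*$ is taken --- and that Lemma~\ref{preserving and right adjoin} is precisely what collapses $(\phi_*)_*$ back to $\phi$; once that identification is in place there is no computation left, and both equivalences are immediate.
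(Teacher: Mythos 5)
Your proof is correct and follows exactly the route the paper intends: the paper's own proof is simply ``It is clear, by Lemma~\ref{preserving and right adjoin},'' and your argument is precisely the unwinding of that remark, using $\phi_{**}=\phi$ to translate the $OLG$/$CLG$ conditions on $\phi_*$ into the open/closed conditions on $\phi$.
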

	\begin{proof}
		It is clear, by Lemma \ref{preserving and right adjoin}.
	\end{proof}
	
	\begin{Pro}
		Let  $ \{ (‌F_\alpha,\tau_\alpha) \}_{\alpha \in A} $ be a family of $ LGT $-spaces and  $ (‌ \prod_{\alpha \in A} F_\alpha, \tau_{_P} ) $ be the product $ LGT $-space. Then for each $ \beta \in A $, the projection map $ \pi_{_\beta} :  \prod_{\alpha \in A} F_\alpha \rightarrow F_\beta $ is an open map.
	\end{Pro}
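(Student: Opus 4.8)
The plan is to simply unwind two definitions. First I would recall that for an element $t \in \prod_{\alpha \in A} F_\alpha$ the projection is by definition the $\beta$-th coordinate, $\pi_{_\beta}(t) = t_\beta$, and that by the definition of the open map we only have to verify $\pi_{_\beta}(t) \in \tau_{_\beta}$ for every $t \in \tau_{_P}$ (no preservation hypothesis is needed here, though of course $\pi_{_\beta}$ is arbitrary join preserve as already observed in Theorem~\ref{restriction of projection}).

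Next I would invoke the definition of the product $LG$-topology: if $t \in \tau_{_P}$, then $t_\alpha \in \tau_\alpha$ for every $\alpha \in A$ (and moreover $t_\alpha = 1$ for all but finitely many $\alpha$, which we do not even need). In particular, taking $\alpha = \beta$ gives $\pi_{_\beta}(t) = t_\beta \in \tau_{_\beta}$. Hence $\pi_{_\beta}$ carries open elements to open elements, i.e.\ it is an open map.

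There is essentially no obstacle: the statement is an immediate consequence of how $\tau_{_P}$ is built coordinatewise, and the proof is one line once the definitions are recalled. If desired, one could add the remark that this parallels Theorem~\ref{restriction of projection} (where $\pi_{_\beta}$ was shown to be $OLG$) and, combined with Proposition~\ref{open and and olg right adjoin}, says that $(\pi_{_\beta})_*$ is an $OLG$ map as well.
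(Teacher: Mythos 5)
Your proof is correct and is exactly the ``straightforward'' argument the paper has in mind: membership in $ \tau_{_P} $ already forces every coordinate of $ t $, in particular $ t_\beta = \pi_{_\beta}(t) $, to lie in the corresponding $ \tau_\alpha $, and the definition of open map asks for nothing more. Only your optional closing aside needs care: Proposition \ref{open and and olg right adjoin} assumes $ (\pi_{_\beta})_* $ is arbitrary join preserving, which is not automatic --- for instance $ (\pi_{_\beta})_*(0) $ has $ 1 $ in every coordinate other than $ \beta $, so the empty join need not be preserved --- hence that remark should be justified separately or omitted.
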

	\begin{proof}
		It is straightforward.
	\end{proof}
	
	\begin{Pro}
		Suppose that $ (F_1,\tau_{_1}) $ and $ (F_2,\tau_{_2}) $ are two $ LGT $-spaces, $ \phi : F_1 \rightarrow F_2 $, $ \phi $ and $ \phi_* $ are two arbitrary join preserve and $ \phi(0) = 0 $. If $ \phi $ is an onto open $ OLG $, then $ \tau_{_2} = \tau_\phi $.
	\end{Pro}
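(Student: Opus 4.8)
The plan is to establish the equality $\tau_{_2} = \tau_\phi$ by a double inclusion, where $\tau_\phi = \{\, t \in F_2 : \phi_*(t) \in \tau_{_1} \,\}$ is the quotient $LG$-topology induced by $\phi$. First I would note that $\tau_\phi$ is indeed a legitimate $LG$-topology on $F_2$: this is exactly Proposition \ref{quotient}, whose hypotheses (that $\phi$ is onto and that both $\phi$ and $\phi_*$ are arbitrary join preserve) are part of the assumptions here; the extra hypothesis $\phi(0)=0$ is in fact automatic for any arbitrary join preserve map, since $\phi(0)=\phi(\bigvee\emptyset)=\bigvee\emptyset=0$.

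For the inclusion $\tau_{_2} \subseteq \tau_\phi$ I would simply unwind the definitions: because $\phi$ is an $OLG$ map, $\phi_*(b) \in \tau_{_1}$ for every $b \in \tau_{_2}$, so every open element of $(F_2,\tau_{_2})$ lies in $\tau_\phi$. Equivalently, $\tau_{_2}$ is one of the $LG$-topologies on $F_2$ for which $\phi$ is $OLG$, hence it is contained in the greatest such, which by Proposition \ref{quotient} is precisely $\tau_\phi$.

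For the reverse inclusion $\tau_\phi \subseteq \tau_{_2}$ I would take $t \in \tau_\phi$, so that $\phi_*(t) \in \tau_{_1}$. Since $\phi$ is an open map, $\phi\big(\phi_*(t)\big) \in \tau_{_2}$. Now I invoke Proposition \ref{right adjoint}(c), which (using that $\phi$ is onto) yields $\phi\big(\phi_*(t)\big) = t$; therefore $t \in \tau_{_2}$. Combining the two inclusions gives $\tau_{_2} = \tau_\phi$.

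The argument is short, and the only step requiring genuine care is the use of the identity $\phi \circ \phi_* = \mathrm{id}_{F_2}$: this is where surjectivity of $\phi$ is essential, since without it one only has $\phi(\phi_*(t)) \leqslant t$, which would not let us conclude $t \in \tau_{_2}$. Beyond that, the "open $OLG$" hypothesis splits cleanly — the openness half drives $\tau_\phi \subseteq \tau_{_2}$ while the $OLG$ half (together with the maximality in Proposition \ref{quotient}) drives $\tau_{_2} \subseteq \tau_\phi$ — so no further obstacle is expected.
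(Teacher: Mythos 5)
Your proof is correct and is precisely the expansion of the paper's one-line citation of Propositions \ref{right adjoint} and \ref{quotient}: the $OLG$ hypothesis gives $\tau_{_2}\subseteq\tau_\phi$ directly from the definition of $\tau_\phi$, while openness together with $\phi(\phi_*(t))=t$ (which indeed requires surjectivity, as you note) gives $\tau_\phi\subseteq\tau_{_2}$. No gaps; your side remark that $\phi(0)=0$ is automatic for an arbitrary join preserving map (take the empty join) is also accurate.
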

	\begin{proof}
		It is easy, by Propositions \ref{right adjoint} and \ref{quotient}.
	\end{proof}
	
	\begin{Lem}
		Suppose that $ (F_1,\tau_{_1}) $ and $ (F_2,\tau_{_2}) $ are two $ LGT $-spaces. If $ \phi $ is an arbitrary join preserve one-to-one map from $ F_1 $ onto $ F_2 $, then
		\begin{itemize}
			\item[(a)] $ \phi(1) = 1 $.
			\item[(b)] $ \phi_*(0) = 0 $.
			\item[(c)] $ \phi_*(b^*) = ( \phi_* (b) )^* $, for every $ b \in F_2 $.
			\item[(d)] $ \phi(a^*) = ( \phi(a) )^* $, for every $ a \in F_1 $.
		\end{itemize}
		\label{one-to-one and complement}
	\end{Lem}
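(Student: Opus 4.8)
The plan is to set up the basic bijective structure first and then reduce everything to a single bookkeeping lemma about the condition ``$\wedge = 0$''. By Lemma \ref{one-to-one and preserving}, since $\phi$ is arbitrary join preserve, one-to-one and onto, the right adjoint $\phi_*$ is \emph{both} arbitrary join preserve and arbitrary meet preserve; moreover $\phi_*$ is itself one-to-one and onto (indeed $\phi_* = \phi^{-1}$, a one-to-one onto map with an adjoint being its own inverse), and by Lemma \ref{preserving and right adjoin} we have $(\phi_*)_* = \phi$. With this in hand, parts (a) and (b) are immediate: $\phi$ arbitrary join preserve gives $\phi(1) = \phi(\bigvee F_1) = \bigvee \phi(F_1) = \bigvee F_2 = 1$ using surjectivity, and dually $\phi_*(0) = \phi_*(\bigvee \emptyset) = \bigvee \emptyset = 0$ since $\phi_*$ preserves arbitrary joins.

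For (c) I would expand both sides using the definition $c^* = \bigvee\{z : z \wedge c = 0\}$. Since $\phi_*$ preserves arbitrary joins,
\[ \phi_*(b^*) = \phi_*\Big(\bigvee\{x \in F_2 : x \wedge b = 0\}\Big) = \bigvee\{\phi_*(x) : x \in F_2,\ x \wedge b = 0\}. \]
The key step is the equivalence, for $x \in F_2$,
\[ x \wedge b = 0 \qquad \Longleftrightarrow \qquad \phi_*(x) \wedge \phi_*(b) = 0, \]
which uses that $\phi_*$ is arbitrary meet preserve (so $\phi_*(x \wedge b) = \phi_*(x) \wedge \phi_*(b)$), that $\phi_*(0) = 0$ by part (b), and that $\phi_*$ is injective (so $\phi_*(x \wedge b) = 0 = \phi_*(0)$ forces $x \wedge b = 0$). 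Because $\phi_*$ is a bijection of $F_2$ onto $F_1$, this equivalence says precisely that $\{\phi_*(x) : x \wedge b = 0\}$ equals $\{y \in F_1 : y \wedge \phi_*(b) = 0\}$, whose join is $(\phi_*(b))^*$ by definition; hence $\phi_*(b^*) = (\phi_*(b))^*$. I expect this part to be the main obstacle: it is the only place where one genuinely needs both preservation directions and injectivity of $\phi_*$ working together, and care is needed to argue that the \emph{image} of the indexing set is exactly the desired set rather than merely contained in it.

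Finally, (d) follows from (c) by symmetry. The map $\phi_* : F_2 \rightarrow F_1$ is, by the first paragraph, again an arbitrary join preserve one-to-one onto map, so (c) applies to it and yields $(\phi_*)_*(a^*) = \big((\phi_*)_*(a)\big)^*$ for every $a \in F_1$; since $(\phi_*)_* = \phi$ by Lemma \ref{preserving and right adjoin}, this is exactly $\phi(a^*) = (\phi(a))^*$.
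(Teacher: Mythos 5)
Your proof is correct and follows essentially the same route as the paper's: both arguments rest on Lemma \ref{one-to-one and preserving} to conclude that the bijection and its right adjoint $\phi_* = \phi^{-1}$ preserve meets and $0$, and then use injectivity and surjectivity to identify the set of elements disjoint from $b$ with the set of elements disjoint from $\phi_*(b)$, which transfers the pseudocomplement. The only cosmetic difference is organizational: you prove the identity for $\phi_*$ first and deduce (d) via $(\phi_*)_* = \phi$, whereas the paper carries out the two-inequality computation for $\phi$ and declares the adjoint case similar.
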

	\begin{proof}
		(a) and (b). They follow from Lemma  \ref{one-to-one and preserving}.
		
		(c). By Lemma \ref{one-to-one and preserving},
		\[ \phi(a^*) \wedge \phi(a) = \phi(a^* \wedge a) = \phi(0) = 0 \]
		If $ b \wedge \phi(a) = 0 $, for some $ b \in F_2 $, then $ a' \in F_1 $ exists such that $ \phi(a') = b $, so, by Lemma \ref{one-to-one and preserving},
		\[ 0= \phi(a') \wedge \phi(a) = \phi(a' \wedge a) \quad \Rightarrow \quad a' \wedge a = 0 \quad \Rightarrow \quad a' \leqslant a^* \quad \Rightarrow \quad b = \phi(a') \leqslant \phi(a^*) \]
		Hence $ \phi(a^*) = ( \phi(a) )^* $.
		
		(d). By Lemma  \ref{one-to-one and preserving}, it is similar that (c).
	\end{proof}
	
	Now we can conclude the following corollary form Propositions \ref{OLG is LG} and \ref{open and and olg right adjoin} and the above lemma.
	
	\begin{Cor}
		Suppose that $ (F_1,\tau_{_1}) $ and $ (F_2,\tau_{_2}) $ are two $ LGT $-spaces and $ \phi $ is a join preserve one-to-one map from $ F_1 $ onto $ F_2 $. Then the following statements are equivalent
		\begin{itemize}
			\item[(a)]  $ \phi $ is an $ OLG $ map.
			\item[(b)] $ \phi $ is an $ LG $ map
			\item[(c)] $ \phi_* $ is an open map.
		\end{itemize}
		\label{two sided and morphism}
	\end{Cor}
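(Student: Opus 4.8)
The plan is to deduce everything from the three results the corollary advertises, so the argument is short and essentially bookkeeping. First I would record the structural consequences of $\phi$ being a one-to-one map of $F_1$ onto $F_2$ that is arbitrary join preserve: by Lemma \ref{one-to-one and preserving}, $\phi_*$ is again arbitrary join preserve (in fact $\phi_* = \phi^{-1}$), and by Lemma \ref{preserving and right adjoin} we have $\phi_{**} = \phi$. These two facts are exactly the standing hypotheses needed to invoke Propositions \ref{OLG is LG} and \ref{open and and olg right adjoin} for both $\phi$ and $\phi_*$.

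For (a) $\Rightarrow$ (b), assuming $\phi$ is an $OLG$ map, I would invoke Lemma \ref{one-to-one and complement}(c) to get $\phi_*(b^*) = (\phi_*(b))^*$ for every $b \in F_2$; this is precisely the extra hypothesis in Proposition \ref{OLG is LG}, which then gives that $\phi$ is an $LG$ map. The implication (b) $\Rightarrow$ (a) is immediate, since by definition an $LG$ map is in particular $OLG$.

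For (a) $\Leftrightarrow$ (c), I would apply Proposition \ref{open and and olg right adjoin}(a) with $\phi$ replaced by $\phi_*$ — legitimate because $\phi_*$ and $\phi_{**}$ are both arbitrary join preserve. Read for $\phi_*$, that proposition says $\phi_*$ is an open map if and only if $(\phi_*)_*$ is an $OLG$ map, and $(\phi_*)_* = \phi_{**} = \phi$; hence $\phi_*$ is open precisely when $\phi$ is $OLG$. Together with the previous paragraph this closes the equivalences (a) $\Leftrightarrow$ (b) $\Leftrightarrow$ (c).

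There is no real obstacle here; the only point requiring any care is making sure that every time one of the quoted propositions is used, the map in question and its right adjoint are both arbitrary join preserve, and that is guaranteed once and for all by Lemmas \ref{one-to-one and preserving} and \ref{preserving and right adjoin}. No genuine computation is involved.
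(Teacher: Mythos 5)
Your proposal is correct and follows exactly the route the paper intends: the corollary is stated without proof beyond citing Propositions \ref{OLG is LG} and \ref{open and and olg right adjoin} and Lemma \ref{one-to-one and complement}, and your bookkeeping (using Lemma \ref{one-to-one and complement}(c) to feed Proposition \ref{OLG is LG} for (a)~$\Rightarrow$~(b), and applying Proposition \ref{open and and olg right adjoin}(a) to $\phi_*$ together with $\phi_{**}=\phi$ for (a)~$\Leftrightarrow$~(c)) fills in those citations correctly. The only remark worth adding is that (a)~$\Leftrightarrow$~(c) is in fact immediate from the definitions, since ``$\phi_*$ is an open map'' and ``$\phi$ is $OLG$'' unwind to the same condition $\phi_*(t)\in\tau_{_1}$ for all $t\in\tau_{_2}$.
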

	
	Example \ref{clg is not olg} shows that an arbitrary join preserve one-to-one onto $ CLG $ map need not be $ LG $ map.
	
	\begin{Def}
		Suppose that $ (F_1, \tau_{_1}) $ and $ (F_2,\tau_{_2}) $ are two $ LGT $-spaces. A one-to-one map $ \phi $ from $ F_1 $ onto $ F_2 $ is called $ LGT $ isomorphism (briefly, isomorphism), if $ \phi $ and $ \phi^{-1} $ are $ LG $ maps, then we say $ (F_1, \tau_{_1}) $ and $ (F_2,\tau_{_2}) $ are isomorphic.
	\end{Def}
	
	\begin{Thm}
		Suppose that $ (F_1,\tau_{_1}) $ and $ (F_2,\tau_{_2}) $ are two $ LGT $-spaces and $ \phi $ is a join preserve one-to-one map from $ F_1 $ onto $ F_2 $. Then the following are equivalent
		\begin{itemize}
			\item[(a)] $ \phi $ is an isomorphism.
			\item[(b)] $ \phi $ is an open $ LG $ map.
			\item[(c)] $ \phi^{-1} $ is an open $ LG $ map.
		\end{itemize}
		\label{isomorphism}
	\end{Thm}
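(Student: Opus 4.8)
The plan is to reduce everything to the two characterizations already available: Corollary \ref{two sided and morphism}, which says that for a join preserving bijection $\phi$ the properties "$OLG$'', "$LG$'' and "$\phi_*$ open'' coincide, and Proposition \ref{open and and olg right adjoin}, which relates openness of a map to the $OLG$ property of its adjoint. The key structural fact that makes this work is that a join preserving one-to-one onto map $\phi:F_1\to F_2$ has $\phi_*=\phi^{-1}$ (Lemma \ref{one-to-one and preserving}), that $\phi^{-1}$ is itself a join preserving bijection, and that $(\phi^{-1})_*=\phi_{**}=\phi$ by Lemma \ref{preserving and right adjoin}; in particular both $\phi$ and $\phi_*$, as well as $\phi^{-1}$ and $(\phi^{-1})_*$, are arbitrary join preserving, so the hypotheses of the two cited results are always met.

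First I would record that, by Corollary \ref{two sided and morphism} applied to $\phi$, the statements "$\phi$ is $OLG$'' and "$\phi$ is $LG$'' are equivalent, and likewise (applying the same corollary to $\phi^{-1}$) "$\phi^{-1}$ is $OLG$'' and "$\phi^{-1}$ is $LG$'' are equivalent. Hence, by the definition of isomorphism, condition (a) is equivalent to the conjunction "$\phi$ is $OLG$ and $\phi^{-1}$ is $OLG$''. Next, Proposition \ref{open and and olg right adjoin}(a) applied to $\phi$ gives that $\phi$ is open $\iff$ $\phi_*=\phi^{-1}$ is $OLG$, and applied to $\phi^{-1}$ gives that $\phi^{-1}$ is open $\iff$ $(\phi^{-1})_*=\phi$ is $OLG$. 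Putting these together: "$\phi$ is an open $LG$ map'' $\iff$ "$\phi^{-1}$ is $OLG$ and $\phi$ is $OLG$'' $\iff$ (a), which yields (a)$\Leftrightarrow$(b); and "$\phi^{-1}$ is an open $LG$ map'' $\iff$ "$\phi$ is $OLG$ and $\phi^{-1}$ is $OLG$'' $\iff$ (a), which yields (a)$\Leftrightarrow$(c).

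The argument is essentially bookkeeping with adjunctions, so I do not expect a genuine obstacle; the one point that needs care is invoking Corollary \ref{two sided and morphism} and Proposition \ref{open and and olg right adjoin} in the correct direction and for the correct map ($\phi$ versus $\phi^{-1}$), and checking each time that the "join preserving together with its adjoint'' hypothesis holds — which is exactly what Lemma \ref{one-to-one and preserving} and Lemma \ref{preserving and right adjoin} guarantee for a join preserving bijection. An equivalent presentation would establish the cycle (a)$\Rightarrow$(b)$\Rightarrow$(c)$\Rightarrow$(a) using the same inputs, but the symmetric route above makes the role of the adjunction $\phi_*=\phi^{-1}$, $(\phi^{-1})_*=\phi$ most transparent.
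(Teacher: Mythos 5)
Your proposal is correct and follows essentially the same route as the paper, which likewise deduces the theorem from Lemma \ref{one-to-one and preserving}, Corollary \ref{two sided and morphism} and the identities $\phi_*=\phi^{-1}$, $(\phi^{-1})_*=\phi$; your additional appeal to Proposition \ref{open and and olg right adjoin} just makes explicit the bookkeeping the paper leaves as ``immediate.'' No gaps.
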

	\begin{proof}
		It follows immediately from Lemma \ref{one-to-one and preserving}, Corollary \ref{two sided and morphism} and these facts that $ \phi_* = \phi^{-1} $ and $ (\phi^{-1})_* = \phi $.
	\end{proof}

	\begin{Thm}
		Let  $ \{ (‌F_\alpha,\tau_\alpha) \}_{\alpha \in A} $ be a family of $ LGT $-spaces and  $ (‌ \prod_{\alpha \in A} F_\alpha, \tau_{_P} ) $ be the product $ LGT $-space. Then for each $ \beta \in A $, there are some subspaces of $ \prod_{\alpha \in A} F_\alpha $ which are  isomorphic to $ F_\beta $.
		\label{Component is isomorphim to a subspace}
	\end{Thm}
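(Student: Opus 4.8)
The plan is to exhibit, for each fixed $\beta \in A$, one concrete subspace of $\prod_{\alpha\in A}F_\alpha$ isomorphic to $F_\beta$. Let $a \in \prod_{\alpha\in A}F_\alpha$ be the element with $a_\beta = 1$ and $a_\alpha = 0$ for all $\alpha\ne\beta$ (recall $0 \in \tau_\alpha$ for every $\alpha$, since $\tau_\alpha$ is a subframe), and put $\phi := \pi_{_\beta}|_{F_a} : (F_a,\tau_a) \to (F_\beta,\tau_\beta)$. First I would record the elementary properties of $\phi$. The down-set $F_a = \downarrow a$ is exactly $\{x \in \prod_{\alpha\in A}F_\alpha : x_\alpha = 0 \text{ for all } \alpha \ne \beta\}$; joins of subsets of $F_a$, being formed coordinatewise, stay below $a$ and hence agree with those taken in $\prod_{\alpha\in A}F_\alpha$, so the coordinate map $\phi$ is arbitrary join preserve. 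It is one-to-one since an element of $F_a$ is determined by its $\beta$-th coordinate, and it is onto since for $y\in F_\beta$ the tuple whose $\beta$-th coordinate is $y$ and all of whose other coordinates are $0$ lies in $F_a$ and is mapped to $y$.

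The core of the argument then just assembles results already proved. By Theorem \ref{restriction of projection} the projection $\pi_{_\beta}$ is an $OLG$ map, so by Theorem \ref{restriction of continuous}(a) its restriction $\phi = \pi_{_\beta}|_{F_a}$ is an $OLG$ map; since $\phi$ is in addition an arbitrary join preserve one-to-one onto map, Corollary \ref{two sided and morphism} promotes this to the statement that $\phi$ is an $LG$ map. Finally, $\phi$ is an open map: every element of $\tau_a$ is of the form $s\wedge a$ with $s\in\tau_{_P}$, and $\phi(s\wedge a) = (s\wedge a)_\beta = s_\beta\wedge 1 = s_\beta \in \tau_\beta$. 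Thus $\phi$ is a join preserve one-to-one onto open $LG$ map, so by Theorem \ref{isomorphism} it is an $LGT$ isomorphism, and $(F_a,\tau_a)$ is a subspace of $\prod_{\alpha\in A}F_\alpha$ isomorphic to $(F_\beta,\tau_\beta)$.

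I expect no real obstacle here; the only point deserving attention is the bookkeeping in the first step, namely that clamping every coordinate other than $\beta$ to $0$ identifies $F_a$ with $F_\beta$ as a frame and identifies $\tau_a$ with $\tau_\beta$ along $\phi$ — both being immediate from the coordinatewise description of the product frame and of $\tau_{_P}$. If one wished to sidestep the appeal to Theorem \ref{restriction of continuous}, one could instead compute $\phi_*$ directly: by Lemma \ref{adjoint restriction} together with the identity $(\pi_{_\beta})_*(y) = v$ (where $v_\beta = y$ and $v_\alpha = 1$ for $\alpha\ne\beta$) extracted from the proof of Theorem \ref{restriction of projection}, one obtains $\phi_*(y) = (\pi_{_\beta})_*(y)\wedge a = \phi^{-1}(y)$, after which $OLG$-ness of $\phi$ is verified coordinatewise and the rest proceeds as above.
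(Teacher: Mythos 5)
Your proposal is correct and follows essentially the same route as the paper: the same element $a$ (with $a_\beta=1$ and all other coordinates $0$), the same restriction $\pi_{_\beta}|_{F_a}$, shown to be a one-to-one onto $OLG$ map via Theorems \ref{restriction of projection} and \ref{restriction of continuous}, then open by the coordinatewise computation, and finally an isomorphism via Corollary \ref{two sided and morphism} and Theorem \ref{isomorphism}. Your write-up is somewhat more careful than the paper's (e.g.\ you verify join preservation and injectivity explicitly, and your computation $\phi(s\wedge a)=s_\beta$ fixes a small typo in the paper's $\pi_{_\beta}|_{F_a}(t\wedge 1)$), but the argument is the same.
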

	\begin{proof}
		Set $ F =  \prod_{\alpha \in A} F_\alpha   $ and $ a \in F $ such that 
		\[   a_\alpha  = \begin{cases}
		0  & \alpha \neq \beta \\
		1  &  \alpha = \beta 
		\end{cases} \]
		Then $ \pi_{_\beta}|_{_{F_a}} : F_a \rightarrow F_\beta $ is $ OLG $, by  Theorems \ref{restriction of projection} and \ref{restriction of continuous}. Clearly, $ \pi_{_\beta}|_{_{F_a}} $ is an one-to-one onto map. Now suppose that $ t \wedge a \in \tau_a  $, in which $ t  \in \tau_{_P} $. Then $ \pi_{_\beta}|_{_{F_a}} (t \wedge 1) =  t_\beta \in \tau_{_\beta} $, so $ \pi_{_\beta}|_{_{F_a}} $ is open, and therefore $ \pi_{_\beta}|_{_{F_a}} $ is an isomorphism map, by Proposition \ref{two sided and morphism} and Theorem \ref{isomorphism}. Hence $ F_\beta $ is isomorphism to the subspace $ F_a $ of $  \prod_{\alpha \in A} F_\alpha $
	\end{proof}
	
	\begin{Def}
		A property is called $ LG $-property if it preserves by isomorphism.
	\end{Def}
	\begin{Thm}
		Compactness, countably compactness and Lindel\"of, $ ps $-property, $ T_0 $, $ T_1 $, $ T_2 $, regular and $ T_3 $ properties are $ LG $-properties.
		\label{LG property}
	\end{Thm}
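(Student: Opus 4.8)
The plan is to leverage the structural richness of an isomorphism. By Theorem \ref{isomorphism} together with Lemma \ref{one-to-one and preserving} and Lemma \ref{one-to-one and complement}, an isomorphism $\phi : (F_1,\tau_{_1}) \rightarrow (F_2,\tau_{_2})$ is at once a lattice isomorphism with $\phi^{-1} = \phi_* = \phi^*$, a bijection of $\tau_{_1}$ onto $\tau_{_2}$, and a map with $\phi(a^*) = \phi(a)^*$ for every $a \in F_1$; consequently it carries $\tau_{_1}^*$ bijectively onto $\tau_{_2}^*$, preserves arbitrary joins and meets, and therefore preserves interior and closure, $\phi(a^\circ) = \phi(a)^\circ$ and $\phi(\overline{a}) = \overline{\phi(a)}$. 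First I would record these facts as a single preliminary observation, so that afterwards each listed property only has to be seen to be expressible purely in terms of the order, the open elements, the closed elements, joins, meets, pseudocomplements, interior, closure and the points of the frame (in the sense of \cite{aliabad2015lg}); any property of this form is automatically transported along $\phi$ and along $\phi^{-1}$.

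Next I would dispose of the three covering properties directly from Proposition \ref{image of compact}. An isomorphism $\phi$ is onto and $OLG$, and since $\phi_* = \phi^{-1}$ is arbitrary join preserve with $\phi_*(1) = 1$, every cover $\{u_\alpha\}_{\alpha \in A} \subseteq \tau_{_2}$ with $\bigvee_{\alpha \in A} u_\alpha = 1$ satisfies $\bigvee_{\alpha \in A}\phi_*(u_\alpha) = \phi_*\big(\bigvee_{\alpha \in A} u_\alpha\big) = 1$, which is precisely the hypothesis of Proposition \ref{image of compact}. Hence its parts (a), (b) and (c) give that $F_2$ is compact, countably compact, or Lindel\"of whenever $F_1$ is; running the same argument with $\phi^{-1}$ in place of $\phi$ yields the converses.

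For the $ps$-property and the axioms $T_0$, $T_1$, $T_2$, regularity and $T_3$ I would proceed one axiom at a time. After recalling each definition from \cite{aliabad2015lg}, I would observe that it is a (finitary or infinitary) first-order condition on the frame whose only ingredients are the ones preserved by $\phi$ according to the first paragraph, with the points of $F_1$ matched bijectively to those of $F_2$ since $\phi$ is an order isomorphism. One then applies $\phi$ to each quantified element and invokes the preservation facts to turn ``$F_1$ has the property'' into ``$F_2$ has the property'', the reverse implication coming from $\phi^{-1}$. Because each of these axioms is assembled from $T_1$-type clauses, regularity-type clauses and open/closed separation clauses, this is a short clause-by-clause check in each case.

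The hard part will be essentially expository: one must make sure the $LGT$-space versions of the $ps$-property and of $T_0$--$T_3$ and regularity are written in the above ``structural'' form, so that no defining clause secretly refers to data an isomorphism need not preserve. Conceptually there is no obstacle once it is recorded that an isomorphism preserves the order, $\tau$, $\tau^*$, the points of the frame, arbitrary joins and meets, interior, closure and the pseudocomplement; the remainder is routine bookkeeping.
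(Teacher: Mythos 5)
Your proposal is correct and follows essentially the same route as the paper: the covering properties are handled via Proposition \ref{image of compact} with the hypothesis supplied by Lemma \ref{one-to-one and preserving} (since $\phi_*=\phi^{-1}$ preserves arbitrary joins and $\phi_*(1)=1$), and the separation axioms are transported using the structure-preservation facts, with Lemma \ref{one-to-one and complement} doing the work for regularity and $T_3$. Your write-up merely makes explicit the ``everything in sight is preserved by an isomorphism'' bookkeeping that the paper leaves as ``easy''.
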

	\begin{proof}
		By  Lemma \ref{one-to-one and preserving} and Proposition \ref{image of compact}, The compactness, countably compactness and Lindel\"of property are $ LG $-properties. It is easy to proof that the $ ps $-property, $ T_0 $, $ T_1 $ and $ T_2 $ properties are $ LG $-properties. Finally, by the use of Lemma \ref{one-to-one and complement}, one can show that the regular property and therefore $ T_3 $ property are $ LG $-properties.
	\end{proof}
	
	In \cite[Proposition 4.14]{aliabad2015lg}, it has been shown that if the product $ LGT $-space of the family of $ LGT $-spaces $ \{ ( F_\alpha, \tau_{_\alpha } ) \}_{\alpha \in A} $ is $ T_0 $ ($ T_1 $), then $ F_\alpha $ is $ T_0 $ ($ T_1 $), for every $ \alpha \in A $. Now, by Theorems \ref{Component is isomorphim to a subspace} and \ref{LG property} and \cite[Proposition 4.12]{aliabad2015lg}, we can say that they are evident.

\end{document}